\documentclass{amsart}
\usepackage{amsmath}
\usepackage{amsthm}
\usepackage[curve]{xypic}
\usepackage{dsfont}
\usepackage{latexsym}
\usepackage[T1]{fontenc}
\usepackage[utf8]{inputenc}
\usepackage{setspace}
\usepackage{varioref}
\usepackage{overpic}
\usepackage{verbatim}

\setlength{\parindent}{0pt}


\newtheorem{lemma}{Lemma}[section]
\newtheorem{sublemma}[lemma]{Sublemma}

\newtheorem{theorem}[lemma]{Theorem}
\newtheorem{corollary}[lemma]{Corollary}

\theoremstyle{definition}
\newtheorem{df}[lemma]{Definition} 

\def\id{\mathrm{id}}

\def\IN{\mathds{N}}
\def\IR{\mathds{R}}

\def\Inv{\mathrm{Inv}}

\def\eps{\varepsilon}

\newcommand\spr[1]{\mathord{\langle#1\rangle}}

\newcommand\abs[1]{\mathord{\left\lvert#1\right\rvert}}
\newcommand\norm[1]{\mathord{\left\Vert#1\right\Vert}}

\newcommand\loc{\mathrm{loc}}

\DeclareMathOperator{\codim}{codim}

\DeclareMathOperator{\Null}{\mathcal{N}}
\DeclareMathOperator{\Ran}{\mathcal{R}}
\DeclareMathOperator{\cl}{cl}
\DeclareMathOperator{\spn}{span}

\DeclareMathOperator{\Def}{\mathcal{D}}

\DeclareMathOperator{\grad}{\nabla}
\newcommand{\de}{\mathrm{d}}

\numberwithin{equation}{section}
\numberwithin{figure}{section}

\title[Gradient-like asymptotically autonomous equations]{The generic gradient-like structure of certain asymptotically autonomous semilinear parabolic equations}
\author{A. J\"anig}
\email{axel.jaenig@uni-rostock.de}
\address{Institut f\"ur Mathematik, Universit\"at Rostock, 18051 Rostock, Germany}

\begin{document}

\begin{abstract}
  We consider asymptotically autonomous semilinear parabolic equations
  \begin{equation*}
    u_t + Au = f(t,u).
  \end{equation*}
  Suppose that $f(t,.)\to f^\pm$ as $t\to\pm\infty$, where the semiflows
  induced by
  \begin{equation}
    \label{eq:140602-1511}
    u_t + Au = f^\pm(u) \tag{*}
  \end{equation}
  are gradient-like. Under certain assumptions,
  it is shown that generically with respect to a perturbation $g$
  with $g(t)\to 0$ as $\abs{t}\to\infty$, every solution of
  \begin{equation*}
    u_t + Au = f(t,u) + g(t)
  \end{equation*}
  is a connection between equilibria $e^\pm$ of \eqref{eq:140602-1511}
  with $m(e^-)\geq m(e^+)$. Moreover, if the Morse indices satisfy
  $m(e^-) = m(e^+)$, then
  $u$ is isolated by linearization.
\end{abstract}

\maketitle

\newcommand{\X}{\mathcal{X}}
\newcommand{\Y}{\mathcal{Y}}
\newcommand{\Z}{\mathcal{Z}}
\begin{section}{Introduction}
  Let $\Omega\subset \IR^m$, $m\geq 1$ be a bounded domain with
  smooth boundary. As an illustrative example for the abstract result
  in the following section, consider the following problem
  \begin{align}
    \label{eq:dirichlet}
            \partial_t u - \Delta u  &= f(t,x,u(t,x),\nabla u(t,x)) \\
            \notag  u(t,x) & = 0 & x\in\partial\Omega\\
            \notag  u(t,x) & = u_0(x) & x\in\Omega
  \end{align}

  Suppose that $f$ is sufficiently regular and $f(t,x,u,v)\to f^\pm(x,u)$ as $t\to\pm\infty$ uniformly
  on compact subsets. Note that the limit nonlinearities $f^\pm$ are independent
  of the gradient $\grad u$. The limit problems
  \begin{align}
    \label{eq:dirichlet2}
    \partial_t u - \Delta u  &= f^\pm(x,u(t,x)) \\
    \notag  u(t,x) & = 0 & x\in\partial\Omega\\
    \notag  u(t,x) & = u_0(x) & x\in\Omega
  \end{align}
  define local gradient-like semiflows on an appropriate Banach space $\tilde X$.
  It is well-known that for generic $f^\pm$, every equilibrium
  of \eqref{eq:dirichlet2} is hyperbolic. Hence, a solution $u:\;\IR\to \tilde X$
  is either an equilibrium solution or a heteroclinic connection.
  
  It has been proved \cite{brunpol} 
  that for a generic $f$ the
  semiflow induced by
  \begin{align*}
    \partial_t u - \Delta u  &= f(x,u(t,x),0) \\
    \notag  u(t,x) & = 0 & x\in\partial\Omega\\
    \notag  u(t,x) & = u_0(x) & x\in\Omega
  \end{align*}
  is Morse-Smale. For the above equation, the Morse-Smale property means the following.
  \begin{enumerate}
    \item[(1)] Every bounded subset of $\tilde X$ contains only finitely many equilibria.
    \item[(2)] Given a pair $(e^-,e^+)$ of equilibria, the stable manifold $W^s(e^+)$ and
    the unstable manifold $W^u(e^-)$ intersect transversally.
  \end{enumerate}
  
  An easy consequence of property (2) is stated below.
  \begin{enumerate}
    \item[(2')] A connection\footnote{non-trivial, that is, except for constant solutions}
      between $e^-$ and $e^+$ can only exist if the respective
      Morse-indices satisfy $m(e^+)<m(e^-)$.
  \end{enumerate}

  The aim of this paper is to investigate if and how
  property (1) and (2') can be generalized
  to semilinear parabolic equations which are asymptotically autonomous,
  for example \eqref{eq:dirichlet}.
  Roughly speaking, the general situation is as follows: 
  Equilibria in the autonomous case
  correspond to connections between two equilibria having the same
  Morse-index, and every
  bounded set contains only finitely many such connections.
  Furthermore, a connection between equilibria $e^-$ and 
  $e^+$ can only exist if $m(e^+)\leq m(e^-)$.

  The proof of our results is similar to the relevant parts of \cite{brunpol},
  applying an abstract transversality theorem to a suitable
  differential operator. As a result, we know that for a dense subset of possible
  perturbations, $0$ is a regular value of this operator.
  
  Using the framework of \cite{brunpol}, namely the characterization
  of transversality in terms of the existence of exponential dichotomies
  on halflines \cite[Corollary 4.b.4]{brunpol}, we could try to prove that an appropriate generalization 
  of (2) (see \cite{carvalho2015non, czaja2017definition}) holds with respect to a perturbation for which the abstract
  differential operator has $0$ as a regular value. Following the approach of \cite{brunpol}, 
  we would have to assume that the evolution operator
  defined by the linearized equation at a heteroclinic solution is injective \cite[Lemma 4.a.12]{brunpol}.
  (1) and (2') can be proved to hold for a generic perturbation without
  the injectivity assumption. For this reason, (2) is replaced by (2').

  We will now apply Theorem \ref{th:140410-1618} to the
  concrete problem \eqref{eq:dirichlet}. Let $p>m\geq 1$, $X:=L^p(\Omega)$, which is reflexive,
  and define an operator 
  \begin{align*}
    A &:& &W^{2,p}(\Omega) \cap W^{1,p}_0(\Omega) \to L^p(\Omega) \\
    Au &:=& &-\Delta u.
  \end{align*}
  $A$ is a positive sectorial operator and has compact resolvent. As usual,
  define the fractional power space $X^\alpha$ as the range of 
  $A^{-\alpha}$ equipped with the norm $\norm{x}_\alpha := \norm{A^\alpha x}_X$.
  For $\alpha<1$ sufficiently large, the space $X^\alpha$ 
  is continuously imbedded in $C^1(\bar \Omega)$
  (see for instance \cite[Lemma 37.8]{sellyou}).
  Hence, $f$ gives rise to a Nemitskii operator $\hat f:\; \IR\times X^\alpha \to X$,
  where
  \begin{equation*}
    \hat f(t,u)(x) := f(t,x,u(x),\grad u(x)).
  \end{equation*}
  Suppose that for some $\delta>0$
  \begin{enumerate}
    \item $f(t,.)\to f^\pm$ uniformly on sets of the form $\Omega\times B_\eta(0)\times B_\eta(0)\subset\Omega\times\IR\times\IR^m$, 
      where $\eta>0$ and $f^\pm:\; \Omega\times\IR\to \IR$
      is continuously differentiable in its second variable with $\partial_u f^\pm(x,u)$ being continuous,
    \item $f(t,x,.,.)$ is $C^\infty$, and
    \item each partial derivative of $f(t,x,.,.)$ is continuous in $x$ and
      Hölder-continuous in $t$ with Hölder-exponent $\delta$
      uniformly on sets of the form $\IR\times\Omega\times B_\eta(0)\times B_\eta(0)\subset \IR\times\Omega\times\IR\times\IR^m$,
      $\eta>0$.
  \end{enumerate}

  Let $C^{0,\delta}_0(\IR\times\bar \Omega)$ denote the set of all in $t$ Hölder-continuous (with exponent $\delta>0$)
  functions $g:\; \IR\times\Omega\to \IR$ with $g(t,x)\to 0$ as $t\to\pm\infty$
  uniformly on $\Omega$. $C^{0,\delta}_0(\IR\times\bar \Omega)$ is endowed with the norm
  \begin{equation*}
    \norm{g} := \sup_{(t,x)\in\IR\times\Omega} \abs{g(t,x)} + \sup_{(t,x)\neq (t',x)\in\IR\times\Omega} \frac{\abs{g(t,x) - g(t',x)}}{\abs{t-t'}^\delta}.
  \end{equation*}

  \begin{theorem}
  	In addition to the hypotheses above, assume that every equlibrium of the equations
  	\begin{equation*}
	  	u_t + Au = \hat f^{\pm}(u)
  	\end{equation*}
  	is hyperbolic.
  	
    Then there is a residual subset $Y\subset C^{0,\delta}_0(\IR\times\bar\Omega)$ such that
    for all $g\in Y$ and for
	every bounded solution of $u:\; \IR\to W^{2,p}(\Omega)$ of
    \begin{equation*}
      u_t + Au = \hat f(t,u) + \hat g(t),
    \end{equation*}
    it holds that:
    \begin{enumerate}
      \item There are equilibria $e^\pm$ of 
        \begin{equation*}
          u_t + Au = \hat f^\pm(u)
        \end{equation*}
        such that $u(t)\to e^\pm$ in $C(\bar\Omega)$ as $t\to\pm\infty$.
      \item $m(e^-)\geq m(e^+)$ and $m(e^-) = m(e^+)$ only if
        \begin{equation*}
          v_t + Av = D\hat f(t,u(t))v
        \end{equation*}
        does not have a non-trivial ($L^p(\Omega)$-) bounded solution.
    \end{enumerate}
  \end{theorem}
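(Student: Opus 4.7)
The plan is to verify the hypotheses of Theorem \ref{th:140410-1618} in the concrete functional framework laid out before the theorem statement, and then to read off the two conclusions directly from it. The setup is $X = L^p(\Omega)$, $A = -\Delta$ with Dirichlet boundary condition, and $\alpha < 1$ sufficiently large so that $X^\alpha \hookrightarrow C^1(\bar\Omega)$. Via the embedding $\hat g(t)(x) := g(t,x)$, the space $C^{0,\delta}_0(\IR\times\bar\Omega)$ is identified with a subspace of $\delta$-Hölder continuous curves $\IR\to X$ vanishing at infinity, which will serve as the perturbation space.

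The first verification concerns $\hat f$. Because $X^\alpha\hookrightarrow C^1(\bar\Omega)$, the substitution operator $\hat f\colon\IR\times X^\alpha\to X$ is well-defined, and standard Nemitskii-operator arguments, combined with assumptions (2) and (3) on $f$, show that $\hat f$ is $C^\infty$ in its second argument with each derivative $\delta$-Hölder continuous in $t$, uniformly on bounded subsets of $X^\alpha$. Assumption (1) upgrades to $\hat f(t,\cdot)\to\hat f^\pm$, together with its derivatives, as $t\to\pm\infty$ uniformly on bounded subsets of $X^\alpha$, since bounded subsets of $X^\alpha$ are bounded in $C^1(\bar\Omega)$.

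The second verification is that the limit semiflows generated by $u_t + Au = \hat f^\pm(u)$ are gradient-like. Since $f^\pm$ does not depend on $\grad u$, setting $F^\pm(x,u) := \int_0^u f^\pm(x,s)\,\de s$ gives the Lyapunov functional
\begin{equation*}
  E^\pm(u) := \int_\Omega \left(\tfrac{1}{2}\abs{\grad u(x)}^2 - F^\pm(x,u(x))\right)\de x,
\end{equation*}
which satisfies $\tfrac{\de}{\de t}E^\pm(u(t)) = -\norm{u_t(t)}_{L^2}^2$ along classical orbits and hence strictly decreases on non-equilibrium orbits. Together with the assumed hyperbolicity of all equilibria, this yields gradient-like semiflows on $X^\alpha$ in which every bounded orbit converges to an equilibrium.

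With these verifications in hand, Theorem \ref{th:140410-1618} produces a residual subset $Y\subset C^{0,\delta}_0(\IR\times\bar\Omega)$ such that, for $g\in Y$, every bounded solution of the perturbed equation converges in $X^\alpha$ to equilibria $e^\pm$, with $m(e^-)\geq m(e^+)$ and equality only when the linearization at $u$ admits no non-trivial bounded solution. A bounded solution $u\colon\IR\to W^{2,p}(\Omega) = D(A)$ is a fortiori bounded in $X^\alpha$, and convergence in $X^\alpha$ upgrades to $C(\bar\Omega)$-convergence via $X^\alpha\hookrightarrow C^1(\bar\Omega)$. The main obstacle I anticipate is the careful matching of function-space conventions---in particular, ensuring that $C^{0,\delta}_0(\IR\times\bar\Omega)$ is an admissible perturbation space for Theorem \ref{th:140410-1618}, so that the residual subset produced abstractly is genuinely residual in $C^{0,\delta}_0(\IR\times\bar\Omega)$ itself, and not merely in some larger ambient space of $X$-valued Hölder curves into which it embeds.
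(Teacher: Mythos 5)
Your reduction to the abstract theorem, and the verifications you sketch (smoothness of the Nemitskii operator via $X^\alpha\hookrightarrow C^1(\bar\Omega)$, the Lyapunov functional for the gradient-independent limit equations), match what the paper takes for granted in this concrete setting. But the proof has a genuine gap, and it sits exactly at the point you flag as ``the main obstacle I anticipate'' and then do not resolve. Theorem \ref{th:140410-1618} produces a residual subset of $C^{0,\delta}_{B,0}(\IR,X^\beta)$ for a \emph{fractional power space} $X^\beta$ of $A$; the perturbation space in the statement, $C^{0,\delta}_0(\IR\times\bar\Omega)$, is (under $g\mapsto\hat g$) a space of Hölder curves into $E:=C(\bar\Omega)$, which is not any $X^\beta$ --- it is merely an intermediate space with $X^1\subset E\subset X^0$ continuously. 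Residuality does not pass from a space to a continuously and densely embedded subspace carrying a strictly finer topology: the trace of a residual set on such a subspace can fail to be residual there, so you cannot simply ``read off'' a residual subset of $C^{0,\delta}_0(\IR\times\bar\Omega)$ from the abstract theorem applied with $\beta=0$.

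This is precisely why the paper interposes Corollary \ref{co:140602-1439}, and its proof is not formal: one needs the finer structure of the generic set, namely that the set $Y$ of good perturbations in $C^{0,\delta}_{B,0}(\IR,X)$ is a countable intersection of sets $Y_n$ that are \emph{open} in the $X$-valued curve space (hence open in the $E$-valued space by continuity of $E\hookrightarrow X$), together with a \emph{second} application of the transversality result, Theorem \ref{th:131127-1503}, with perturbations valued in $X^1$ (i.e.\ $\beta=1$), to show that $Y\cap C^{0,\delta}_{B,0}(\IR,X^1)$ is dense --- and, since $X^1\hookrightarrow E$ is continuous, dense in each $Y_n\cap C^{0,\delta}_{B,0}(\IR,E)$ for the $E$-topology. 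Only then is $\bigcap_n\bigl(Y_n\cap C^{0,\delta}_{B,0}(\IR,E)\bigr)$ residual in $C^{0,\delta}_{B,0}(\IR,E)\cong C^{0,\delta}_0(\IR\times\bar\Omega)$. To complete your argument you must either reproduce this two-tier openness-plus-density argument or invoke the corollary with $E=C(\bar\Omega)$, checking the continuous embeddings $X^1=W^{2,p}(\Omega)\cap W^{1,p}_0(\Omega)\subset C(\bar\Omega)\subset L^p(\Omega)=X^0$ (which hold since $p>m$).
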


  Since there are continuous imbeddings $X^1\subset C(\bar\Omega, \IR)\subset X^0$,
  the above theorem follows immediately from Corollary \ref{co:140602-1439}.
\end{section} 

\begin{section}{Abstract formulation of the result}
  Let $X$ and $Y$ be normed spaces and $X_0\subset X$ be open. $\mathcal{L}(X,Y)$
  is the space of all continuous linear operators $X\to Y$ endowed with
  the usual operator norm. The open ball with radius $\eps$ and center $x$ in $X$ is denoted by
  $B_\eps(x)$ and the closed ball with the same radius and center by $B_\eps[x]$.

  $C^k_B(X_0,Y)$ denotes the space of all $k$-times continuously differentiable
  mappings $X_0\to Y$ with bounded derivatives up to order $k$. The spaces
  are endowed with the usual norm
  \begin{equation*}
    \norm{y} := \sup_{x\in X_0} \max \{\norm{y(x)},\dots,\norm{D^ky(x)}\}
  \end{equation*}
  The space $C^{k,\delta}_B(X_0, Y)$ is the subspace of $C^k_B(X_0, Y)$
  consisting of all functions in $C^k_B(X_0, Y)$ whose $k$-order
  derivative is Hölder-continuous with exponent $\delta>0$. In the case $\delta=0$,
  we simply set $C^{k,0}_B(X_0, Y) := C^{k}_B(X_0, Y)$. 
  On $C^{k,\delta}_B(X_0, Y)$,
  we consider the norm 
  \begin{equation*}
    \norm{y} := \norm{y}_{C^k_B(\IR, X)} + \sup_{x,x'\in X_0\:x\neq x'} \frac{\norm{D^ky(x) - D^ky(x')}}{\norm{x-x'}^\delta}.
  \end{equation*}

  Let $\eta>0$ and $i_\eta:\; B_\eta(0)\cap X_0\to X_0$ the inclusion mapping. Let
  $C^{k,\delta}_b(X_0, Y)$ denote the
  set of all functions $f:\; X_0\to Y$ such that $f\circ i_\eta \in C^{k,\delta}_b(X_0\cap B_\eta(0), Y)$
  for all $\eta>0$.   We also write $C^k_b(X_0,Y) := C^{k,0}_b(X_0,Y)$ and $C_b(X_0,Y) := C^0_b(X_0,Y)$
  for short. These spaces are equipped with an invariant metric
  \begin{equation*}
    d(f,f') := d(f-f',0) := \sum_{n\in\IN} 2^{-n}\frac{\norm{(f-f')\circ i_\eta}}{1+\norm{(f-f')\circ i_\eta}}.
  \end{equation*}
  This metric induces the respective topology of uniform convergence on
  bounded sets, that is, $f_n\to f$ in $C^k_b(X_0, Y)$ (resp. $C^{k,\delta}_b(X_0, Y)$)
  if $f_n\circ i_\eps \to f\circ i_\eps$ in $C^{k,\delta}_B(X_0\cap B_\eps(0),Y)$ for every $\eps>0$.
  $C^{k,\delta}_{B,0}(\IR,Y)$ denotes the closed subspace of $C^{k,\delta}_B(\IR,Y)$ containing all
  functions $x$ with $x(t)\to 0$ as $\abs{t}\to\infty$.
 
  \begin{df}
    A family $T(t,s)$ defined for real numbers $t\geq s$
    of continuous linear operators is called a
    {\em linear evolution operator} if $T(r,t)T(t,s) = T(r,s)$
    for all $r\geq t\geq s$.
  \end{df}

  \begin{df}
    \label{df:140519-1839}
    We say that an evolution operator $T(t,s)$ on a normed space $X$
    admits an {\em exponential dichotomy}
    on an interval $J$ if there are constants $\gamma,M>0$
    and a family $(P(t))_{t\in J}$ in $\mathcal{L}(X,X)$
    such that:
    \begin{enumerate}
      \item $T(t,s)P(s) = P(t)T(t,s)$ for $t\geq s$.
      \item The restriction $T(t,s):\; \Ran(P(s))\to\Ran(P(t))$
        is an isomorphism. Its inverse is denoted by $T(s,t)$, where $s<t$.
      \item $\norm{T(t,s)(I-P(s))}_{\mathcal{L}(X,X)} \leq M e^{-\gamma (t-s)}$ for $t\geq s$.
      \item $\norm{T(t,s)P(s)}_{\mathcal{L}(X,X)}\leq M e^{\gamma (t-s)}$ for $t<s$.
    \end{enumerate}

    We also refer to the the family of projections as an exponential dichotomy.
  \end{df}

  \begin{df}
    \label{df:140509-1746}
    Let $\pi$ be a semiflow on a normed space $X$. We say that
    $\pi$ is {\em simple gradient-like} if:
    \begin{enumerate}
      \item[(a)] Every equilibrium $e$ of $\pi$ is isolated\footnote{The term {\em simple} refers to this hypothesis.}.
      \item[(b)] For every bounded solution $u:\;\IR\to X$, one has $u(t)\to e^-$
        as $t\to-\infty$ and $u(t)\to e^+$ as $t\to\infty$.
      \item[(c)] There is a partial order $\prec$ on the set $E$ of all equilibria
        such that $e^+\prec e^-$ whenever $u$ satisfies (b).
      \item[(d)] If $u$ is given by (b) and $e^- = e^+$, then $u\equiv e$.
    \end{enumerate}
  \end{df}
  
  Unless otherwise stated, let $X$ be a reflexive Banach space and
  $A$ a positive sectorial operator defined on subspace $X^1\subset X$.
  $X^\alpha := \Ran(A^{-\alpha})$ denotes the $\alpha$-th
  fractional power space with the norm $\norm{x}_\alpha := \norm{A^\alpha x}$.
  We will assume that the operator $A$ has compact resolvent.

  Fix some $\delta\in\left]0,1\right[$, and let $f\in C^{1,\delta}_b(\IR\times X^\alpha, X)$
  be asymptotically autonomous, that is, there are $f^\pm\in C^{1,\delta}_b(X^\alpha, X)$
  such that $f(t,.)\to f^\pm$ in $C^{1,\delta}_b(X^\alpha,X)$ as $t\to\pm\infty$. 
  We consider solutions of
  \begin{equation}
    \label{eq:140410-1557}
    u_t + Au = f(t,u)
  \end{equation}
  and its limit equations
  \begin{equation}
    \label{eq:140410-1558}
    u_t + Au = f^\pm(u).
  \end{equation}
  The above equations define evolution operators (respectively semiflows
  in the autonomous case) on $X^\alpha$.

  By an equilibrium $e$ of \eqref{eq:140410-1558}, we mean
  a point $e\in X^\alpha$ such that $u:\;\IR\to X^\alpha$, $t\mapsto e$,
  solves \eqref{eq:140410-1558}. We say that an equilibrium 
  $e$ is {\em hyperbolic} if the linearized equation
  \begin{equation*}
    u_t + Au = Df^\pm(e)u
  \end{equation*}
  admits an exponential dichotomy $(P(t))_{t\in\IR}$. The Morse-index of
  $e$ is the dimension of the exponential dichotomy, respectively the
  dimension of the range of its associated projection i.e., 
  $m(e) := \dim\Ran P(t)$, where $t\in\IR$ can be chosen
  arbitrarily. 

  \begin{theorem}
    \label{th:140410-1618}
    Assume that:
    \begin{enumerate}
      \item[(a)] Every equilibrium $e$ of \eqref{eq:140410-1558} is hyperbolic.
      \item[(b)] $f\in C_b(\IR\times X^\alpha, X)$
      \item[(c)] $f(t,.)\to f^\pm$ in $C^1_b(X^\alpha,X)$ as $t\to\pm\infty$.
      \item[(d)] $f(t,.)$ is $C^\infty$ for each $t\in\IR$, $t\mapsto D^kf(t,.)$
	      Hölder-continuous  with Hölder-exponent $\delta$ 
        uniformly on sets of the form $\IR\times B_\eta(0)\subset \IR\times X^\alpha$, 
        and $\norm{D^kf(t,x)} \leq C(k,\norm{x}_\alpha)$
        for all $k\in\IN\cup\{0\}$ and all $(t,x)\in\IR\times X^\alpha$.
      \item[(e)] The semiflows induced by \eqref{eq:140410-1558} are gradient-like.
    \end{enumerate}

    Let $\beta\in\left[0,1\right]$, and let $C^{0,\delta}_{B,0}(\IR, X^\beta)$ denote the complete subspace
    of all $x\in C^{0,\delta}_B(\IR, X^\beta)$ with $\norm{x(t)}_\alpha\to 0$
    as $\abs{t}\to\infty$.
    Then, for a generic\footnote{i.e., there is a residual subset of $C^{0,\delta}_B(\IR, X^\beta)$ such that all $g$
      in this subset have the stated property}
    $g\in C^{0,\delta}_{B,0}(\IR, X^\beta)$, every bounded
    solution $u:\;\IR\to X^\alpha$ of
    \begin{equation}
      \label{eq:140530-1823}
      u_t + Au = f(t,u) + g(t)
    \end{equation}
    satisfies:
    \begin{enumerate}
      \item[(1)] There are equilibria $e^-$, $e^+$ of the respective
        limit equation \eqref{eq:140410-1558} such that
        $\norm{u(t) - e^-}_\alpha \to 0$ as $t\to-\infty$ and
        $\norm{u(t) - e^+}_\alpha \to 0$ as $t\to\infty$.
      \item[(2)] $m(e^+)\leq m(e^-)$ and
        $m(e^-) = m(e^+)$ only if the linear equation
        \begin{equation}
          \label{eq:140410-1736}
          v_t + Av = Df(t,u(t))v
        \end{equation}
        does not have a non-trivial bounded solution $v:\;\IR\to X^\alpha$.
    \end{enumerate}
  \end{theorem}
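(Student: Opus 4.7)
The plan is to handle property (1) and property (2) separately. Property (1) follows directly from asymptotic autonomy of the perturbed equation and the gradient-like hypothesis, uniformly in $g$; property (2) will require a Sard--Smale argument applied to a carefully chosen Fredholm differential operator.

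First, since $g(t) \to 0$ in $X^\beta$ as $\abs{t}\to\infty$, equation \eqref{eq:140530-1823} is itself asymptotically autonomous with the same limit equations \eqref{eq:140410-1558}. For any bounded solution $u:\IR\to X^\alpha$, compactness of $A^{-1}$ yields precompactness of the trajectory, and a standard argument shows that its $\omega$- and $\alpha$-limit sets are invariant under the limit semiflows at $\pm\infty$. By hypothesis (e) each such limit set consists of equilibria of \eqref{eq:140410-1558}, which are isolated by hyperbolicity, so it reduces to a single point. This establishes (1) for every $g \in C^{0,\delta}_{B,0}(\IR, X^\beta)$. Moreover, hyperbolicity of $e^\pm$ implies exponential convergence $u(t) \to e^\pm$ via the standard stable/unstable manifold argument, which will be needed below.

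For property (2), I would fix a pair $e^\pm$ of equilibria and introduce the affine Banach manifold $\mathcal{U}_{e^-,e^+}$ of trajectories $u$ in a weighted Hölder class converging exponentially to $e^\pm$, together with an ambient space $\mathcal{Y}$ of exponentially decaying Hölder maps $\IR \to X^\beta$. Define
\begin{equation*}
  F(u,g)(t) := u_t(t) + Au(t) - f(t,u(t)) - g(t).
\end{equation*}
Its partial derivatives are $D_uF(u,g)v = v_t + Av - Df(t,u)v$ and $D_gF(u,g) = -\id$. The former is Fredholm of index $m(e^-) - m(e^+)$, by the standard correspondence between Fredholm properties of linear parabolic evolution operators on $\IR$ and exponential dichotomies at $\pm\infty$ (hypothesis (a) plus roughness); the latter is trivially surjective. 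Hence $F$ is a submersion, its zero set is a smooth Banach submanifold, and the projection onto the $g$-factor is a Fredholm map of index $m(e^-) - m(e^+)$.

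Applying the Sard--Smale theorem to this projection produces a residual set of $g$ for which every $u$ in the zero fibre is a regular point, i.e.\ $D_uF(u,g)$ is surjective. By the Fredholm index this forces $m(e^-) \geq m(e^+)$, and if equality holds then $D_uF(u,g)$ is also injective, which is precisely the non-existence of non-trivial bounded solutions to \eqref{eq:140410-1736}. Since isolated equilibria and boundedness of $u$ restrict $(e^-,e^+)$ to a countable set of pairs, the desired $Y$ is obtained by countable intersection. The main technical obstacle will be establishing the Fredholm property and the index formula for $D_uF(u,g)$ on the chosen Hölder spaces: this requires weights compatible with the dichotomies at $e^\pm$ and a proof that $f$ induces a sufficiently smooth Nemitski map (drawing on hypothesis (d)). A secondary point is verifying that every bounded solution actually lies in some $\mathcal{U}_{e^-,e^+}$, which follows from the exponential convergence established in the first step.
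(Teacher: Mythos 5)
Your treatment of (1) follows the paper's route (Lemma \ref{le:140307-1751}) in spirit, but the step ``by hypothesis (e) each such limit set consists of equilibria'' is not automatic: a compact connected invariant set of a gradient-like semiflow can a priori contain heteroclinic orbits, not just equilibria. The paper closes this by picking a \emph{minimal} equilibrium $e_0$ of $\omega(u)$ with respect to the order of Definition \ref{df:140509-1746} and deriving a contradiction from a backward solution trapped in a small ball around $e_0$; some argument of this kind is required, and it is the actual content of Lemma \ref{le:140307-1751}.

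The serious gap is in (2), at the sentence ``$D_gF(u,g)=-\id$ \dots is trivially surjective, hence $F$ is a submersion.'' The perturbations range over $C^{0,\delta}_{B,0}(\IR,X^\beta)$ with $\beta\in[0,1]$ arbitrary: they take values in $X^\beta$ and merely tend to $0$ at $\pm\infty$, whereas the residual $u_t+Au-f(t,u)-g$ lives in a space of $X$-valued functions. Since $A$ has compact resolvent, $X^\beta$ is a proper dense subspace of $X$ for $\beta>0$, so $g\mapsto -g$ is the inclusion of a proper dense subspace of the target and is \emph{not} surjective (and even for $\beta=0$ there is a mismatch between your exponentially weighted ambient space and the decay class $C^{0,\delta}_{B,0}$ in which genericity is asserted). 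The submersion property --- the whole engine of the Sard--Smale argument --- is therefore exactly where the paper works hardest: Lemma \ref{le:131206-1558} and Theorem \ref{th:131206-1658} construct a \emph{finite-dimensional} space of compactly supported, $X^1$-valued perturbations $w_1,\dots,w_m$ (obtained by rotating the unstable bundle $\mathcal{U}(t_1)$ onto a complement of the stable bundle $\mathcal{S}(t_2)$, via Lemmas \ref{le:131206-1352} and \ref{le:131216-1436}) such that $(u,w)\mapsto u_t+Au-B(t)u-w$ is already surjective on $\X\times\spn\{w_1,\dots,w_m\}$; this is also what lets the genericity statement survive restriction to smaller perturbation classes (Corollary \ref{co:140602-1439}). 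Two further points you leave open: the $\sigma$-properness of the projection $(u,g)\mapsto g$, which the paper verifies through the sets $\X_{n,m}$ and a compactness argument and without which the transversality theorem does not deliver a residual set; and the identification of $\ker D_uF$ on your weighted spaces with the space of \emph{bounded} solutions of \eqref{eq:140410-1736}, which needs the a priori exponential decay of bounded solutions of the linearized equation. Note finally that the paper sidesteps the full index formula $m(e^-)-m(e^+)$, deducing $m^-\geq m^+$ and injectivity in the equal-index case directly from surjectivity via the adjoint equation (Lemmas \ref{le:140107-1702} and \ref{le:140107-1701}).
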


  Note that (2) is equivalent to the existence of an exponential dichotomy
  for \eqref{eq:140410-1736} (cf. the proof of Lemma \ref{le:131219-1505}).

  \begin{proof}
    \begin{enumerate}
      \item Since the limit equations \eqref{eq:140410-1558}
        are gradient-like, this is a consequence of Lemma \ref{le:140307-1751}.
      \item This follows from Theorem \ref{th:131127-1503} together with 
        Lemma \ref{le:140107-1701}.
    \end{enumerate}
  \end{proof}

  \begin{corollary}
    \label{co:140602-1439}
    Let $E$ be a normed space such that $X^1\subset E\subset X^0$,
    the inclusions being continuous.

    Moreover, assume the hypotheses of Theorem \ref{th:140410-1618}.
    Then the conclusions of Theorem \ref{th:140410-1618} hold
    for a generic $g\in C^{0,\delta}_{B,0}(\IR, E)$.
  \end{corollary}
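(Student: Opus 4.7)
The plan is to deduce the corollary from two applications of Theorem \ref{th:140410-1618}. Let $\iota\colon C^{0,\delta}_{B,0}(\IR, E) \hookrightarrow C^{0,\delta}_{B,0}(\IR, X^0)$ denote the continuous inclusion induced by $E\subset X^0$. The first key observation is that being a ``good'' perturbation---i.e., satisfying conclusions (1) and (2) of Theorem \ref{th:140410-1618}---depends on $g$ only through the equation $u_t + Au = f(t,u) + g(t)$, which in turn only involves $g$ as a map into $X = X^0$. Consequently the good set $G_E \subset C^{0,\delta}_{B,0}(\IR, E)$ coincides with $\iota^{-1}(G_0)$, where $G_0$ denotes the good set in $C^{0,\delta}_{B,0}(\IR, X^0)$.

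Applying Theorem \ref{th:140410-1618} with $\beta=0$ provides a dense $G_\delta$ set $\bigcap_n U_n \subset G_0$, with each $U_n$ open and dense in $C^{0,\delta}_{B,0}(\IR, X^0)$. Each preimage $\iota^{-1}(U_n)$ is then open in $C^{0,\delta}_{B,0}(\IR, E)$, and the argument reduces to establishing density of each $\iota^{-1}(U_n)$. To this end, fix $g_0 \in C^{0,\delta}_{B,0}(\IR, E)$ and introduce the shifted nonlinearity $\tilde f(t,u) := f(t,u) + g_0(t)$. A routine verification (limits unchanged since $g_0(t)\to 0$, $u$-derivatives unchanged, Hölder regularity in $t$ inherited from $g_0 \in C^{0,\delta}_{B,0}(\IR, X)$, and the bounds $\norm{D^k\tilde f(t,x)} \leq C(k,\norm{x}_\alpha)$ preserved by the uniform boundedness of $g_0$ in $X$) confirms that $\tilde f$ again satisfies hypotheses (a)--(e) of Theorem \ref{th:140410-1618}.

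Apply Theorem \ref{th:140410-1618} now with $\beta = 1$ to the shifted problem. Since the equations $u_t+Au=\tilde f(t,u)+h(t)$ and $u_t+Au=f(t,u)+(g_0+h)(t)$ coincide, the corresponding dense $G_\delta$ subset of $C^{0,\delta}_{B,0}(\IR, X^1)$ has the form $\bigcap_n \{h : g_0+h \in U_n\}$, with each summand open dense. Density of each such set at the origin yields, for any $\epsilon>0$, an $h \in C^{0,\delta}_{B,0}(\IR, X^1)$ with $\norm{h}_{X^1}<\epsilon$ satisfying $g_0 + h \in U_n$. The continuous inclusion $X^1 \hookrightarrow E$ then bounds $\norm{h}_E \leq C \norm{h}_{X^1}$, producing $g_0 + h \in \iota^{-1}(U_n)$ arbitrarily close to $g_0$ in $C^{0,\delta}_{B,0}(\IR, E)$, which completes the density argument and hence the proof of residuality.

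The main subtlety is that the open dense decomposition $\bigcap_n U_n$ of the good set for $f$ must align, under the shift $g \mapsto g - g_0$, with the decomposition produced by applying Theorem \ref{th:140410-1618} to $\tilde f$. This alignment succeeds because the defining conditions of the $U_n$'s in the proof of Theorem \ref{th:140410-1618}---nonexistence of bounded solutions to linearized equations, Morse-index inequalities between equilibria of the (common) limit problems, and the like---depend only on the underlying parabolic equation and are therefore invariant under the shift. The hypothesis $X^1 \subset E$ enters precisely to guarantee that the density-producing perturbations $h$ live in $C^{0,\delta}_{B,0}(\IR, X^1) \subset C^{0,\delta}_{B,0}(\IR, E)$ and have controlled $E$-norm.
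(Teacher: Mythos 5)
Your proposal is correct, and it follows the same overall architecture as the paper's proof: decompose the good set at the level of $C^{0,\delta}_{B,0}(\IR,X^0)$ into open dense sets, pull these back along the continuous inclusion to get openness in $C^{0,\delta}_{B,0}(\IR,E)$, and invoke the $\beta=1$ case together with $X^1\hookrightarrow E$ for density. Where you genuinely diverge is the density step. The paper makes a single application of Theorem \ref{th:131127-1503} with $\beta=1$ and then asserts that the resulting set $Y\cap C^{0,\delta}_{B,0}(\IR,X^1)$ is dense in each $Y_n\cap C^{0,\delta}_{B,0}(\IR,E)$; read literally, that would require $C^{0,\delta}_{B,0}(\IR,X^1)$ to be dense in $C^{0,\delta}_{B,0}(\IR,E)$, which is delicate and can even fail in the motivating example (with $E=C(\bar\Omega)$ and $X^1=W^{2,p}(\Omega)\cap W^{1,p}_0(\Omega)$, every $X^1$-valued function vanishes on $\partial\Omega$). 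Your pointwise shift argument avoids this: at each $g_0\in C^{0,\delta}_{B,0}(\IR,E)$ you rerun the genericity theorem for $f+g_0$ with $\beta=1$ and obtain a small $X^1$-valued correction $h$ with $\norm{h}_E\leq C\norm{h}_{X^1}$ and $g_0+h\in U_n$, which only needs density of the shifted good set at the origin of $C^{0,\delta}_{B,0}(\IR,X^1)$. The price is the alignment issue you correctly flag: Theorem \ref{th:140410-1618} as stated does not exhibit the sets $U_n$, so you must open the proof of Theorem \ref{th:131127-1503} and observe that its canonical open dense sets are of the form $\{y:\; 0\text{ is a regular value of }\Phi(\cdot,y)\text{ restricted to }\X_{n,m}\}$; these do transform equivariantly under the shift because $\Phi_{f+g_0}(u,h)=\Phi_f(u,g_0+h)$, hence $D_u\Phi_{f+g_0}(u,h)=D_u\Phi_f(u,g_0+h)$, and the exhaustion $\X_{n,m}$ depends only on the (unchanged) limit equilibria. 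Your verification that $f+g_0$ still satisfies hypotheses (a)--(e) is also correct, noting that only H\"older continuity in $t$, not differentiability, is required of the nonlinearity. In short: same skeleton, but your shifted, per-basepoint density argument supplies the approximation in the finer $E$-topology that the paper's one-shot density claim passes over.
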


  \begin{proof}
  	Let $\Phi$ be defined as in Section \ref{sec:surjectivity}, preceding
	Theorem \ref{th:131127-1503}.
    Let $Y$ denote the set of all $g\in C^{0,\delta}_{B,0}(\IR, X)$ such
    that $0$ is a regular value of $\Phi(.,g)$.
    It follows from Theorem \ref{th:131127-1503} that $Y = \bigcap_{n\in\IN} Y_n$, where each $Y_n$
    is open and dense in $C^{0,\delta}_{B,0}(\IR, X)$.
    A second application of Theorem \ref{th:131127-1503} proves
    that $Y\cap C^{0,\delta}_{B,0}(\IR, X^1)$ is dense in $C^{0,\delta}_{B,0}(\IR, X)$.

    By the continuity of the inclusions, each of the sets
    $Y_n \cap C^{0,\delta}_{B,0}(\IR, E)$ is open in $C^{0,\delta}_{B,0}(\IR, E)$. Moreover,
    $Y\cap C^{0,\delta}_{B,0}(\IR, X^1)$ is a dense subset of each $Y_n \cap C_{B,0}(\IR, E)$,
    which proves that $\bigcap_{n\in\IN} (Y_n \cap C_{B,0}(\IR, E)) = Y\cap C_{B,0}(\IR, E)$
    is residual.
  \end{proof}
\end{section}

\begin{section}{A skew-product semiflow and convergence of solutions}
  Let $Y\subset C_b(\IR\times X^\alpha, X)$ denote the subspace, that is,
  equipped with a metric of convergence uniformly on bounded sets, of all 
  functions $f:\;\IR\times X^\alpha\to X$ such that:
  \begin{enumerate}
    \item $f(t,.)\in C^1_b(X^\alpha, X)$ for all $t\in\IR$
    \item $t\mapsto Df(t,.)$ is a Hölder-continuous function $\IR\to C^1_b(X^\alpha, X)$
  \end{enumerate}

  The above assumptions are rather strong, but we do not 
  strive for maximum generality here.
  It is easy to prove

  \begin{lemma}
    For every $f\in Y$, the translation $t\mapsto f^t(s,x) := f(t+s,x)$,
    $\IR\to Y$ is continuous.
  \end{lemma}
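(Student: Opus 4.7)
The plan is to unwind the topology on $Y$ --- uniform convergence on bounded sets --- and show that for every fixed $R>0$,
\[
\sup_{|s|\leq R,\ \|x\|_\alpha\leq R}\bigl\|f(t+s,x)-f(t_0+s,x)\bigr\|_X\longrightarrow 0\qquad\text{as }t\to t_0.
\]
Since balls in $X^\alpha$ are not compact, joint continuity of $f$ alone cannot produce this; I would exploit the $C^1$ structure of $f(t,\cdot)$ together with the Hölder-in-$t$ hypothesis on $Df$ built into the definition of $Y$.

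The main device would be the fundamental theorem of calculus in the $x$-variable: using $f(\tau,x)-f(\tau,0)=\int_0^1 Df(\tau,\lambda x)x\,\de\lambda$, I split
\[
f(t+s,x)-f(t_0+s,x)=\bigl[f(t+s,0)-f(t_0+s,0)\bigr]+\int_0^1\bigl[Df(t+s,\lambda x)-Df(t_0+s,\lambda x)\bigr]x\,\de\lambda,
\]
reducing matters to a term that depends only on the values of $f$ at $x=0$ and a term controlled purely by the oscillation of $Df$ in $t$.

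For the first bracket, joint continuity of $f$ (from $f\in C_b(\IR\times X^\alpha,X)$) makes $\tau\mapsto f(\tau,0)$ continuous, hence uniformly continuous on the compact interval $[t_0-R-1,t_0+R+1]$, so this contribution vanishes uniformly in $|s|\leq R$. For the integral I would invoke condition~(2) in the definition of $Y$: on the ball of radius $R$ in $X^\alpha$ it yields an exponent $\gamma\in(0,1]$ and a constant $C_R$ with $\|Df(\tau,y)-Df(\tau',y)\|\leq C_R|\tau-\tau'|^\gamma$ for all $\tau,\tau'\in\IR$ and $\|y\|_\alpha\leq R$; the integrand is then dominated by $C_R|t-t_0|^\gamma\|x\|_\alpha\leq C_RR|t-t_0|^\gamma$, uniformly in $s$ and $x$.

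The only mildly delicate point, and the step I expect to check most carefully, is that the shift by $s$ does not enlarge either estimate. This is automatic: hypothesis~(2) is stated globally in $t$, so the Hölder bound for $Df$ depends only on $|\tau-\tau'|$ and not on $\tau$ itself, and continuity of $\tau\mapsto f(\tau,0)$ on a compact interval containing $t_0+[-R,R]$ upgrades for free to uniform continuity. Combining the two pieces gives the required uniform convergence on each bounded set, and hence the continuity of $t\mapsto f^t$ from $\IR$ into $Y$.
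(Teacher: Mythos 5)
The paper gives no proof of this lemma (it is dismissed with ``It is easy to prove''), so there is nothing to compare your argument against; on its own merits your proof is correct and complete. The splitting via the fundamental theorem of calculus in $x$ correctly isolates the two ingredients actually available from the definition of $Y$ --- joint continuity of $f$ (used only along the non-compact-ball-free curve $\tau\mapsto(\tau,0)$) and the Hölder continuity of $t\mapsto Df(t,\cdot)$ uniformly on bounded sets --- and both estimates are uniform in the shift $s$ as you claim. The only point worth flagging is cosmetic: since the Hölder hypothesis in the definition of $Y$ is stated with respect to the Fréchet-type metric of uniform convergence on bounded sets, the bound $\norm{Df(\tau,y)-Df(\tau',y)}\leq C_R\abs{\tau-\tau'}^\gamma$ on the ball of radius $R$ is only extracted for $\abs{\tau-\tau'}$ small; this is all you need for continuity at $t_0$, but the constant is not global in $\abs{\tau-\tau'}$ as your phrasing suggests.
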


  Let $Y_0\subset Y$ be a compact subspace of $Y$ which is invariant
  with respect to translations. We consider solutions of the semilinear parabolic
  equation
  \begin{equation}
    \label{eq:semilin}
    \dot u + Au = y(t,u).
  \end{equation}
  These induce a skew-product semiflow $\pi:=\pi_{Y_0}$ on $Y_0\times X^\alpha$,
  where we set $(y,x)\pi t := (y^t, u(t))$ if there exits a solution $u:\;\left[0,t\right]\to X^\alpha$
  of \eqref{eq:semilin} with $u(0) = x$.
  It follows
  from \cite[Theorem 47.5]{sellyou} that $\pi$ is continuous.

  Now suppose that $y^t\to y^-$ as $t\to-\infty$ and $y^t\to y^+$ as
  $t\to\infty$, where $y^-, y^+\in Y$ are autonomous. It is easily
  seen that the set $Y_0 := \cl_Y \{y^t:\;t\in\IR\} = \{y^t:\;t\in\IR\}\cup\{y^-,y^+\}$
  is compact. Moreover for $y^+$ (resp. $y^-$), 
  \eqref{eq:semilin} defines a semiflow on $X^\alpha$,
  which is denoted by $\chi_{y^+}$ (resp. $\chi_{y^-}$).

  It is easy to see that the two lemmas still hold true in a more general setting, replacing
  the boundedness in $X^\alpha$ by an asymptotic convergence assumption, 
  admissibility \cite{ryb} for example.
  
  \begin{lemma}
    \label{le:140307-1751}
    Assume that $\chi_{y^+}$ (resp. $\chi_{y^-}$) is simple gradient-like,
    and let $u:\;\IR\to X^\alpha$ be a bounded solution of \eqref{eq:semilin}. Then,
    $u(t)$ converges to an equilibrium of $\chi_{y^+}$ (resp. $\chi_{y^-}$) 
    as $t\to\infty$ (resp. $t\to -\infty$).
  \end{lemma}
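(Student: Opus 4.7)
The plan is to study the $\omega$-limit set of the skew-product orbit $\bigl(y^t, u(t)\bigr)$ under $\pi = \pi_{Y_0}$ and show it reduces to a single equilibrium of $\chi_{y^+}$; the $t \to -\infty$ case is symmetric. First I would exploit parabolic smoothing: since $A$ has compact resolvent, boundedness of $u$ in $X^\alpha$ upgrades via the variation-of-constants formula to precompactness of $\{u(t) : t \geq 1\}$ in $X^\alpha$, and together with compactness of $Y_0$ this yields precompactness of the full skew-product orbit. Standard semiflow arguments then produce a nonempty, compact, connected, $\pi$-invariant $\omega$-limit $K$. Since $y^t \to y^+$ in $Y_0$, we have $K \subset \{y^+\} \times X^\alpha$, so writing $K = \{y^+\} \times L$ the projection $L \subset X^\alpha$ is compact, connected, and $\chi_{y^+}$-invariant (the restriction of $\pi$ to $\{y^+\} \times X^\alpha$ being precisely $\chi_{y^+}$).

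It then suffices to prove $L$ is a single equilibrium. By (a), $L \cap E$ is finite. Every $x \in L$ lies on a complete bounded $\chi_{y^+}$-trajectory contained in $L$, which by (b)--(d) converges forward to some $e^+_x \in L \cap E$ and backward to some $e^-_x \in L \cap E$ with $e^+_x \prec e^-_x$, strictly unless $x$ is itself an equilibrium. If $L$ is not a single equilibrium, then since $E$ is discrete and $L$ is connected, $L$ must contain a non-equilibrium, and hence two distinct equilibria $e^+, e^- \in L \cap E$ with $e^+ \prec e^-$. The contradiction would come from an asymptotic trapping principle: for each $e \in E$ and each $\varepsilon > 0$ there exist $T, \delta > 0$ such that whenever $\norm{u(t) - e}_\alpha < \delta$ for some $t \geq T$, the trajectory $u(s)$ remains, for all $s \geq t$, in the $\varepsilon$-neighborhood of the forward-invariant downward closure of $e$ (that is, $e$ together with all equilibria strictly below it in $\prec$ and all $\chi_{y^+}$-orbits connecting these). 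Applying this with $e = e^+$ along a sequence $t_n \to \infty$ with $u(t_n) \to e^+ \in L$ traps the tail of $u$ away from $e^-$ (since $e^- \neq e^+$ and $e^- \not\prec e^+$ by antisymmetry of $\prec$), contradicting $e^- \in L$.

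The main obstacle is the asymptotic trapping principle. Without any hyperbolicity assumption on equilibria it must be extracted purely from the simple gradient-like axioms (a)--(d), and propagating it from the autonomous $\chi_{y^+}$ to the asymptotically autonomous $u$ requires the uniformity of $y^t \to y^+$ on bounded subsets of $X^\alpha$ built into the topology of $Y_0$. A closely related alternative --- showing that points in $\omega$-limits of asymptotically autonomous trajectories are chain-recurrent under the limit flow, and that the chain-recurrent set of a simple gradient-like semiflow equals $E$ --- faces the same fundamental difficulty of converting the abstract partial order (c) and the no-homoclinic condition (d) into a quantitative forward-invariance statement near each equilibrium.
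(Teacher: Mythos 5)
Your reduction is sound as far as it goes --- precompactness of the skew-product orbit, $\omega$-limit set of the form $\{y^+\}\times L$ with $L$ compact, connected and $\chi_{y^+}$-invariant, finitely many equilibria in $L$, and the observation that if $L$ is not a singleton it must contain two distinct equilibria $e^+\prec e^-$. But the proof is not complete: everything then hinges on the ``asymptotic trapping principle,'' which you state without proof and yourself identify as the main obstacle. That principle is the actual content of the lemma. It is a uniform, quantitative forward-invariance statement near each equilibrium, nothing in axioms (a)--(d) yields it directly (there is no Lyapunov function and no hyperbolicity), and even if one had it for the autonomous limit semiflow $\chi_{y^+}$ one would still have to transfer it to the non-autonomous solution $u$. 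As written, the proposal reduces the lemma to an unproved claim that is at least as hard as the lemma itself.

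The paper avoids any such uniform estimate by a softer exit-time argument, which is the idea you are missing. Let $e_0$ be a \emph{minimal} (for $\prec$) equilibrium among those in $\omega(u)$, and fix $\eps>0$ so small that $(y^+,e_0)$ is the only equilibrium of $\omega(u)$ in $B_\eps[(y^+,e_0)]$. If $\omega(u)\neq\{(y^+,e_0)\}$, the trajectory enters every neighbourhood of $(y^+,e_0)$ along $t_n\to\infty$ and must afterwards exit $B_\eps[(y^+,e_0)]$ at times $s_n\geq t_n$. Continuity of the skew-product semiflow $\pi$ forces $s_n-t_n\to\infty$ (otherwise the exit points would converge to $(y^+,e_0)\pi r_0=(y^+,e_0)$, off the sphere), so a subsequence of the exit points converges to some $(y^+,x_0)\in\partial B_\eps[(y^+,e_0)]\cap\Inv^-_\pi(B_\eps[(y^+,e_0)])$. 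The full bounded $\chi_{y^+}$-orbit through $x_0$ lies in $\omega(u)$; its backward limit is an equilibrium of $\omega(u)$ inside the ball, hence $e_0$, and its forward limit is an equilibrium $e$ of $\omega(u)$ with $e\prec e_0$. Minimality of $e_0$ forces $e=e_0$, and then axiom (d) forces the orbit to be constant, contradicting $x_0\in\partial B_\eps[(y^+,e_0)]$. If you want to salvage your outline, replace the trapping principle by this minimal-equilibrium/long-sojourn argument; it converts the purely qualitative axioms (a)--(d) into a contradiction without any quantitative neighbourhood estimates.
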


  In the following proof, we use as before infix notation for the semiflows, i.e. given
  an arbitrary semiflow $\pi$ on a metric space $X$, we write $x\pi t$
  instead of $\pi(t,x)$. A solution of $\pi$ or with respect to $\pi$
  is a continuous mapping $u:I\to X$ such that $I\subset\IR$ is an interval and
  $u(t) = u(t_0)\pi (t-t_0)$ whenever
  $\left[t_0,t\right]\subset I$. Given $N\subset X$, $\Inv^-_\pi(N)$ denotes the
  negatively invariant subset of $N$, i.e. $x\in \Inv^-_{\pi}(N)$ iff there exists
  a solution $u:\;\left]-\infty,0\right]\to N$ with $u(0) = x$.

  \begin{proof}
    We consider only the case $t\to\infty$ because $t\to-\infty$ can
    be treated analogously. Suppose to the contrary that $N\subset X^\alpha$
    is bounded and $u:\;\IR\to Y_0\times N$
    is a solution with $\omega(u)\neq\{(y^+,e_0)\}$, where $e_0$ denotes
    a minimal equilibrium in $\{x:\; (y^+,x)\in \omega(u)\}$. The minimality refers
    to the partial order $\prec$ introduced in Definition \ref{df:140509-1746}.

	Let $E\subset\{y^+\}\times X^\alpha$ denote the set of all equilibria in $\omega(u)$.
	Pick an $\eps>0$ such that $B_\eps[(y^+,e_0)]\cap E = \{(y^+,e_0)\}$ and
	a sequence $t_n\to\infty$ with $u(t_n)\to (y^+,e_0)$. 
	There are $s_n\geq t_n$
    such that $d(u(s_n), (y^+,e_0))=\eps$ and $u(\left[t_n,s_n\right])\subset B_\eps\left[(y^+,e_0)\right]$.
    
    We claim that $\abs{t_n-s_n}\to\infty$ as $n\to\infty$. Otherwise,
    we may assume without loss of generality that $r_n:=\abs{t_n-s_n}\to r_0$.
    The continuity of the semiflow implies that $\partial B_\eps[(y^+,e_0)]\ni u(s_n) \to (y^+,e_0)\pi r_0$,
    which is a contradiction.
    Choosing a subsequence $(s'_n)_n$ of $(s_n)_n$, we can assume that 
    $u(s'_n)\to (y^+,x_0)\in \partial B_\eps[(y^+,e_0)]\cap \Inv^-_{\pi}(B_\eps[(y^+,e_0)])$.
    Since $\chi_{y^+}$ is simple gradient-like, one has $(y^+,x_0)\pi t\to (y^+,e)$ as $t\to\infty$
    for some $e\in E$, in contradiction to the minimality of $e_0$.
  \end{proof}
\end{section}

\begin{section}{Surjectivity}
  \label{sec:surjectivity}
   
  The main result of this section is Theorem \ref{th:131127-1503}, 
  applying an abstract transversality theorem. One of the key steps
  towards its proof Theorem \ref{th:131206-1658} stating the surjectivity
  of certain linear operators. The main ingredient for the proof of Theorem \ref{th:131206-1658} is
  Lemma \ref{le:131206-1558}, which relies on a geometric idea that 
  can be sketched as follows. Let $u:\;\IR\to X^\alpha$ be a heteroclinic
  solutions that is, a solution connecting hyperbolic equilibria $e^-$ and $e^+$.
  The hyperbolicity of the equilibria implies the existence of exponential
  dichotomies on intervals of the form $\left]-\infty,\tau\right]$ and
  $\left[\tau,\infty\right[$ provided $\tau$ is large enough. The linear
  equation respectively its solution operators determines a connection
  between these dichotomies respectively their associated invariant spaces.
  Perturbing this connection is the idea behind Lemma \ref{le:131206-1558}.
  
  We consider the following (Banach) spaces:
  \begin{align*}
    \X &:= C^{1,\delta}_B(\IR,X) \cap C^{0,\delta}_B(\IR,X^1) \\
    \Y &:= C^{0,\delta}_{B,0}(\IR, X^\beta) := \{y\in C^{0,\delta}_B(\IR,X^\beta):\; y(t)\to 0\text{ as }t\to\pm\infty\}\quad 0\leq\beta\leq 1\\
    \Z &:= C^{0,\delta}_B(\IR,X).
  \end{align*}
  Here, we choose $\norm{x}_\X := \norm{x}_{C^{1,\delta}_B(\IR, X)} + \norm{x}_{C^{0,\delta}_B(\IR, X^1)}$.

  A function $f:\;\IR\times X^\alpha\to X^0$
  gives rise to a Nemitskii operator $\hat f$ defined by
  \begin{equation*}
    \hat f(u)(t) := f(t,u(t)).
  \end{equation*}

  \begin{lemma}
    Under the hypotheses (b), (c) and (d) of Theorem \ref{th:140410-1618},
    $\hat f$ maps bounded Hölder-continuous functions to bounded Hölder-continuous functions,
    that is, $\hat f(C^{0,\delta}_B(\IR,X^\alpha)) \subset C^{0,\delta}_B(\IR,X^0)$
  \end{lemma}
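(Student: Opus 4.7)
The plan is to fix an arbitrary $u\in C^{0,\delta}_B(\IR, X^\alpha)$ with $\sup_{t}\norm{u(t)}_\alpha\leq R$ and Hölder seminorm $\cli{u}_\delta$, and verify the two defining properties of $C^{0,\delta}_B(\IR, X^0)$: boundedness and $\delta$-Hölder continuity of $t\mapsto f(t,u(t))$ into $X^0$. Boundedness is immediate from the uniform bound in hypothesis (d): with $k=0$, one has $\norm{f(t,u(t))}_X\leq C(0,R)$ for all $t\in\IR$.

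For the Hölder estimate, I would use the standard triangle-inequality split
\begin{equation*}
  \norm{f(t,u(t)) - f(s,u(s))}_X \leq \norm{f(t,u(t)) - f(s,u(t))}_X + \norm{f(s,u(t)) - f(s,u(s))}_X,
\end{equation*}
and control the two terms separately. The first term measures the $t$-regularity of $f$ at the point $u(t)\in B_R(0)\subset X^\alpha$: hypothesis (d) (with $k=0$) states that $t\mapsto f(t,\cdot)$ is Hölder-continuous of exponent $\delta$ uniformly on $\IR\times B_R(0)$, hence the first term is bounded by $L(R)\abs{t-s}^\delta$ for a constant $L(R)$ depending only on $R$. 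The second term is handled by the mean value theorem applied to $x\mapsto f(s,x)$: by (d) with $k=1$ the derivative $Df(s,\cdot)$ is bounded by $C(1,R)$ on $B_R(0)$, so
\begin{equation*}
  \norm{f(s,u(t)) - f(s,u(s))}_X \leq C(1,R)\norm{u(t)-u(s)}_\alpha \leq C(1,R)\cli{u}_\delta\abs{t-s}^\delta.
\end{equation*}
Adding both contributions yields the required uniform Hölder estimate with exponent $\delta$.

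There is no real obstacle here; the only point requiring some attention is to match hypothesis (d) literally: the statement gives Hölder regularity of $t\mapsto D^k f(t,\cdot)$ \emph{uniformly on bounded sets in $X^\alpha$}, which is exactly what is needed to apply the argument with $u$ ranging in the bounded set $B_R(0)$. Once this is recognized, the estimate is just a two-line application of (d) for $k=0$ and $k=1$.
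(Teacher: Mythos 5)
Your argument is correct, and it is exactly the standard one: the paper states this lemma without proof, and the two-term split you use (Hölder continuity of $t\mapsto f(t,\cdot)$ uniformly on bounded sets for the first difference, the mean value theorem with the uniform bound $C(1,R)$ on $Df$ for the second) is the same device the paper employs in its proof of the subsequent differentiability result, Lemma \ref{le:140514-1755}. The only cosmetic point is that writing $C(k,R)$ for $\sup_{\norm{x}_\alpha\leq R}\norm{D^kf(t,x)}$ tacitly replaces $C(k,\cdot)$ by its monotone envelope, which is harmless.
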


  \begin{lemma}
    \label{le:140514-1755}
    Under the hypotheses (b) and (d) of Theorem \ref{th:140410-1618}, the mapping
    $\hat f:\; C^{0,\delta}_B(\IR, X^\alpha)\to C^{0,\delta}_B(\IR, X)$ as defined above is $C^\infty$.
  \end{lemma}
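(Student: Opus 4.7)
The plan is to prove by induction on $k$ that $\hat f$ is $k$-times Fréchet differentiable with
\begin{equation*}
  (D^k\hat f(u)[v_1,\dots,v_k])(t) = D_x^k f(t,u(t))(v_1(t),\dots,v_k(t)),
\end{equation*}
where $D_x$ denotes the derivative with respect to the second variable. The key preliminary step is to show that for any $u\in C^{0,\delta}_B(\IR,X^\alpha)$ and any $k\geq 0$, the map $t\mapsto D_x^k f(t,u(t))$ belongs to $C^{0,\delta}_B(\IR,Z_k)$, where $Z_k$ denotes the Banach space of bounded $k$-multilinear maps $(X^\alpha)^k\to X$. For this I would split
\begin{equation*}
  D_x^k f(t,u(t)) - D_x^k f(s,u(s)) = \bigl[D_x^k f(t,u(t)) - D_x^k f(s,u(t))\bigr] + \bigl[D_x^k f(s,u(t)) - D_x^k f(s,u(s))\bigr].
\end{equation*}
The first bracket is controlled by $C|t-s|^\delta$ by the Hölder-in-$t$ regularity in (d), and the second by the mean value theorem applied to $D_x^k f(s,\cdot)$: its norm is at most $\sup\norm{D_x^{k+1}f}\cdot\norm{u(t)-u(s)}_\alpha\leq C'|t-s|^\delta$, using the boundedness of $D_x^{k+1}f$ on bounded sets and the Hölder continuity of $u$.

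Granted this, the displayed formula defines a bounded $k$-multilinear map $C^{0,\delta}_B(\IR,X^\alpha)^k\to C^{0,\delta}_B(\IR,X)$, by the familiar algebra property: pointwise evaluation of a $C^{0,\delta}_B$ operator-valued map on $C^{0,\delta}_B$ vector-valued arguments yields a $C^{0,\delta}_B$ vector-valued map, with norm controlled by the product of the input norms. To prove Fréchet differentiability at $u$ I would write the remainder via Taylor's formula,
\begin{equation*}
  \hat f(u+v)(t) - \hat f(u)(t) - D_x f(t,u(t))v(t) = \int_0^1 (1-s)\,D_x^2 f(t,u(t)+sv(t))(v(t),v(t))\,ds,
\end{equation*}
and apply the preliminary estimate above to $D_x^2 f(\cdot,u(\cdot)+sv(\cdot))$ uniformly in $s\in[0,1]$, together with the algebra estimate, to bound the $C^{0,\delta}_B$-norm of the remainder by $C\norm{v}_{C^{0,\delta}_B}^2$ for $v$ in a bounded neighborhood of $0$.

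Continuity of $u\mapsto D\hat f(u)$ in operator norm follows from a parallel expansion
\begin{equation*}
  D_x f(t,u_1(t)) - D_x f(t,u_2(t)) = \int_0^1 D_x^2 f\bigl(t,u_2(t)+s(u_1(t)-u_2(t))\bigr)(u_1(t)-u_2(t))\,ds,
\end{equation*}
to which the same splitting argument applies. The inductive step is identical in structure: $D_x^{k+1}f$ satisfies exactly the hypotheses imposed on $D_x^kf$ in (d), so the above reasoning reproduces itself at each order. The main technical obstacle I anticipate is the Hölder (as opposed to sup-norm) bound on the Taylor remainder: while the sup-norm estimate is immediate from (d), controlling $\norm{R(v)}_{C^{0,\delta}_B}$ requires the combined use of the Hölder-in-$t$ regularity of $D_x^2 f$, the Lipschitz-in-$x$ estimate coming from boundedness of $D_x^3 f$, and the Hölder regularity of both $u$ and $v$, which must be packaged through the algebra estimate in a careful bookkeeping argument.
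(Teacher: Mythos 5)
Your proposal is correct and follows essentially the same route as the paper: both establish that the pointwise formula $D\hat f(u)v(t)=D_xf(t,u(t))v(t)$ lands in $C^{0,\delta}_B(\IR,X)$ by splitting the Hölder difference into a Hölder-in-$t$ part (from hypothesis (d)) and a mean-value part controlled by the next derivative, then bound the Taylor remainder quadratically via the integral form and a Lipschitz estimate on $D\hat f$, deferring higher orders to the identical argument. The paper writes the remainder as $\int_0^1(\mathrm{D}\hat f(u+\lambda u')-\mathrm{D}\hat f(u))u'\,\de\lambda$ rather than your second-order form, but these are equivalent.
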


  \begin{proof}
	Suppose that $u,u',v\in C^{0,\delta}_B(\IR, X^\alpha)$ satisfy $\norm{u},\norm{u'}\leq M$,
	and set $B(t) := \mathrm{D}_xf(t,u(t))$.
	
	By the assumptions on $\mathrm{D}_x f$ and $\mathrm{D}^2_x f$, 
	there are constants $C_1 := C_1(M)$ and $C_2 := C_2(M)$
	such that for arbitrary $v\in C^{0,\delta}(\IR,X^\alpha)$ and $t,s\in\IR^+$
	\begin{equation*}
	\begin{array}{lcl}
	\norm{B(t)v(t)}_{0} &\leq& C_1 \norm{v}_{C_B(\IR,X^\alpha)} \\
	\norm{B(t+s)v(t+s) - B(t)v(t)}_0 &\leq& C_2s^\delta\norm{v}_{C_B(\IR, X^\alpha)}
	+ C_1s^\delta\norm{v}_{C^{0,\delta}_B(\IR,X^\alpha)}.
	\end{array}
	\end{equation*}
	
	Now, set $B'(t,y):=\mathrm{D}_xf(t,u(t)+y) - \mathrm{D}_xf(t,u(t))$. We have
	\begin{equation*}
	\begin{array}{lcl}
	B'(t,y) &=& \int\limits^1_0 \mathrm{D}^2_x f(t,u(t)+\lambda y)y\de\lambda\\
	
	B'(t,y_1) - B'(t,y_2) &=&
	\left(\mathrm{D}_x f(t,x+y_1) - \mathrm{D}_x f(t,x)\right)\\
	&&- \left(\mathrm{D}_x f(t,x+y_2) - \mathrm{D}_x f(t,x)\right)\\
	&=& \int\limits^1_0 \mathrm{D}^2_x f(t,x+y_2+\lambda (y_1-y_2))(y_1-y_2)\de \lambda\\
	
	B'(t+s,y) - B'(t,y) &=& 
	\int\limits^1_0 \mathrm{D}^2_x f(t+s,u(t+s)+\lambda y)y\\
	&&-\mathrm{D}^2_x f(t,u(t)+\lambda y)y\de\lambda\\
	\end{array}
	\end{equation*}
	
	By the assumptions on $D^k_x f$, there are constants $C_3 := C_3(M)$
	and $C_4 := C_4(M)$ such that for all $y,y_1,y_2 \in B_M(0)\subset X^\alpha$
	and all $z, z_1, z_2\in X^\alpha$
	\begin{equation*}
	\begin{array}{lcl}
	\norm{B'(t,y)z}_0 &\leq& C_3 \norm{y}_\alpha\norm{z}_\alpha\\
	\norm{B'(t+s, y_2)z_2 - B'(t,y_1)z_1}_0 &\leq& C_4\bigl( \norm{y_2}_\alpha\norm{z_2-z_1}_\alpha 
	+ \norm{y_2-y_1}_\alpha \norm{z_1}_\alpha \\
	&&+ s^\delta \norm{y_1}_\alpha \norm{z_1}_\alpha + s^\delta \norm{u}_{C^{0,\delta}_B(\IR, X^\alpha)} \norm{y_1}_\alpha \bigl)\norm{z_1}_\alpha\\
	\end{array}
	\end{equation*}
	
	It follows that $[D\hat f(u)]v(t) := Df(t,u(t))v(t)$ satisfies
	\begin{equation}
	\label{eq:140515-1238}
	\norm{\mathrm{D} \hat f(u+u') - \mathrm{D} \hat f(u)}_{\mathcal{L}(C^{0,\delta}_B(\IR, X^\alpha), C^{0,\delta}_B(\IR, X))} \leq C_5(M) \norm{u'}_{C^{0,\delta}_B(\IR, X^\alpha)}.
	\end{equation}
	
	In particular, one has
	\begin{equation*}
	D\hat f\in C_b\big(C^{0,\delta}_B(\IR, X^\alpha),\;\; \mathcal{L}(C^{0,\delta}_B(\IR, X^\alpha), C^{0,\delta}_B(\IR, X))\big).
	\end{equation*}
	
	Moreover,
	\begin{equation*}
	f(t,x+y) = f(t,x) + \mathrm{D} f(t,x)y + \int\limits^1_0 \left(\mathrm{D} f(t,x+\lambda y) - \mathrm{D} f(t,x)\right)y\,\de \lambda ,
	\end{equation*}
	so
	\begin{equation*}
	\hat f(u+u') - \hat f(u) - \mathrm{D}\hat f(u)u' = \int\limits^1_0 \left(\mathrm{D}\hat f(u+\lambda u') - \mathrm{D}\hat f(u)\right) u'\,\de \lambda ,
	\end{equation*}
	and by \eqref{eq:140515-1238},
	\begin{equation*}
	\norm{\hat f(u+u') - \hat f(u) - \mathrm{D}\hat f(u)u'} \leq C_5(M) \norm{u'}^2,
	\end{equation*}
	which proves that $\hat f$ is continuously differentiable and $D\hat f$ as defined above
	is indeed the derivative. The higher derivatives can be treated analogously.
  \end{proof}

  Define $\Phi := \Phi_f:\; \X\times \Y \to \Z$ by 
  \begin{equation*}
    \Phi(u,g)(s) := u_t(s) + Au(s) - f(s,u(s)) - g(s).
  \end{equation*}
  $\Phi$ is continuous by the choice of $\X$, $\Y$, and $\Z$.

  Recall that a subset of a topological space is nowhere dense
  if the interior of its closure is empty. A countable union
  of nowhere dense sets is called meager and the complement of a meager set residual.
  The following theorem is the main result of this
  section.
  \begin{theorem}
    \label{th:131127-1503}
    Under the hypotheses of Theorem \ref{th:140410-1618}, 
    the set of all $y\in \Y$ such that
    $0$ is a regular\footnote{$D_x\Phi(x_0,y):\; \X\to \Z$ is surjective whenever $\Phi(x_0,y)=0$} value of $\Phi(.,y)$
    is residual (in $\Y$).
  \end{theorem}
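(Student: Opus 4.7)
The plan is to deduce the conclusion from an abstract parametric transversality theorem of Sard--Smale type, with $\Phi$ as the map and $\Y$ as the parameter space. Combining Lemma \ref{le:140514-1755} with the fact that $u\mapsto u_t+Au$ is a bounded linear operator $\X\to\Z$ and that $g$ enters linearly, one sees that $\Phi$ is of class $C^\infty$ on $\X\times\Y$. The forthcoming Theorem \ref{th:131206-1658} supplies the surjectivity hypothesis needed as input: at each zero $(u_0,g_0)$ of $\Phi$, the total derivative $D\Phi(u_0,g_0)\colon\X\times\Y\to\Z$ is surjective. This is precisely the hypothesis from which Sard--Smale deduces that $0$ is a regular value of $\Phi(\cdot,g)$ for a residual set of $g$.

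To turn Sard--Smale into an applicable theorem I would first verify Fredholmness of the partial derivative $D_u\Phi(u_0,g_0)=\partial_t+A-Df(\cdot,u_0(\cdot))\colon\X\to\Z$. By Lemma \ref{le:140307-1751} and hypothesis (a), any zero $u_0$ of $\Phi(\cdot,g_0)$ is a bounded entire solution converging to hyperbolic equilibria $e^\pm$ as $t\to\pm\infty$; then standard Palmer-type roughness arguments give exponential dichotomies for the linearisation on the halflines $(-\infty,-T]$ and $[T,\infty)$ for $T$ sufficiently large, which together with compactness of the resolvent of $A$ yields the Fredholm property of $D_u\Phi(u_0,g_0)$.

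Next one organises $\Phi^{-1}(0)$ into countably many pieces over which a parametric transversality theorem applies. By Lemma \ref{le:140307-1751} and the isolation of hyperbolic equilibria, the possible endpoint pairs $(e^-,e^+)$ of a bounded heteroclinic form a countable set. For each such pair and each $R,T>0$ one defines a set $K_{e^-,e^+,R,T}\subset\Phi^{-1}(0)$ consisting of those $(u_0,g_0)$ with $\norm{u_0}_{\X}\leq R$ and $\sup_{\abs t\geq T}\norm{u_0(\pm t)-e^\pm}_\alpha$ smaller than a prescribed dichotomy threshold. These pieces are closed, they exhaust $\Phi^{-1}(0)$, and the projection $K_{e^-,e^+,R,T}\to\Y$ is proper thanks to uniform decay estimates coming from the exponential dichotomies. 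Sard--Smale therefore produces, on the $n$th piece, an open dense set $Y_n\subset\Y$ of regular values, and $\bigcap_n Y_n$ is the required residual subset of $\Y$.

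The main technical obstacle specific to this theorem is propriety of the projections $K_n\to\Y$: one must uniformly control the tails of $u_0$ across families, which is where the vanishing-at-infinity hypothesis $g\in C^{0,\delta}_{B,0}(\IR,X^\beta)$ enters decisively, together with the asymptotic autonomy of $f$ and the hyperbolicity of the limit equilibria. The deeper obstacle, surjectivity of $D\Phi$ at zeros, is quarantined in Theorem \ref{th:131206-1658} and is not addressed at this level of the argument.
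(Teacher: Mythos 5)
Your overall architecture coincides with the paper's: $\Phi$ is $C^\infty$ by Lemma \ref{le:140514-1755}; $D_u\Phi(u_0,g_0)$ is Fredholm at zeros via exponential dichotomies on halflines (the paper gets this from \cite[Lemma 4.a.11]{brunpol}, Theorem \ref{th:131206-1658} and Lemma \ref{le:140327-1816}); surjectivity of the full derivative is quarantined in Theorem \ref{th:131206-1658}; $\Phi^{-1}(\{0\})$ is exhausted by countably many pieces over which the projection to $\Y$ is proper; and the abstract transversality theorem (Theorem \ref{th:140515-1501}) is applied piecewise, followed by a countable intersection. Your decomposition by endpoint pair, norm bound and tail time is essentially the paper's $\X_{n,m}$ (the endpoint pairs need not be tracked separately, since hyperbolicity gives only finitely many equilibria in each ball).

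The genuine gap is in the properness step, and it is not cosmetic. You assert that the projection $K_{e^-,e^+,R,T}\to\Y$ is proper ``thanks to uniform decay estimates coming from the exponential dichotomies.'' But the elements $u_0$ of these pieces solve the full non-autonomous forced equation, and they approach $e^\pm$ only at whatever rate $g(t)\to 0$ and $f(t,\cdot)\to f^\pm$; no rate is assumed, so no uniform (let alone exponential) decay of the tails is available, and the dichotomies for the linearization along each individual $u_0$ do not control a whole sequence $(u_n)_n$ uniformly. Moreover, before tails can even be discussed one must extract a limit solution from a sequence $(u_n,g_n)$ with $g_n\to g_0$; this requires compactness of the nonlinear evolution operators on $X^\alpha$ (coming from the compact resolvent of $A$), an ingredient you invoke only for Fredholmness. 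The paper's argument is softer and avoids decay rates entirely: compactness yields a subsequence converging to a solution $x_0$ uniformly on compact time intervals; if the convergence failed to be uniform on all of $\IR$, translating in time and passing to the limit equation would produce a nonconstant full bounded solution of the limit equation inside $B_\eps[e]$, contradicting that a hyperbolic equilibrium is an isolated invariant set (this is where the $\eps$-tail condition defining $\X_{n,m}$ is used); parabolic regularity (\cite[Lemma 4.a.6]{brunpol}) then upgrades uniform convergence in $X^\alpha$ to convergence in $\X$. You need to replace the decay-estimate claim with an argument of this kind, or else genuinely establish uniform tail control via local invariant-manifold theory, which the stated hypotheses do not obviously support.
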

  
  In order to prove Theorem \ref{th:131127-1503},
  we need to check the premises of the following theorem,
  which is a simplified version of \cite[Theorem 2.1]{brunpol}
  (see also \cite[Theorem 5.4]{henry2}).
  \begin{theorem}
    \label{th:140515-1501}
    Let $X,Y,Z$ be open subsets of Banach spaces, $r$ a positive integer,
    and $\Phi:\; X\times Y\to Z$ a $C^r$ map. 
    Assume that the following hypotheses are satisfied:
    \begin{enumerate}
      \item For each $(x,y)\in \Phi^{-1}(\{0\})$, $D_x\Phi(x,y):\; X\to Z$
        is a Fredholm operator of index less than $r$.
      \item For each $(x,y)\in \Phi^{-1}(\{0\})$ $D\Phi(x,y):\; X\times Y\to Z$
        is surjective.
      \item The projection $p:(x,y)\mapsto y:\; \Phi^{-1}(\{0\})\to Y$
        is $\sigma$-proper, that is, there is a countable system
        of subsets $V_n\subset \Phi^{-1}(\{0\})$ such that 
        $\bigcup_{n\in\IN} V_n = \Phi^{-1}(\{0\})$ and for
        each $n\in\IN$ the restriction $p_n:\; V_n\cap \Phi^{-1}(\{0\})\to Y$ of $p$
        is proper.
    \end{enumerate}
    
    Then the set of all $y\in Y$ such that $0$ is a regular
    value of $\Phi(.,y)$ is residual in $Y$.
  \end{theorem}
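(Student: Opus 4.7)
The plan is to deduce Theorem \ref{th:131127-1503} directly from the abstract transversality theorem \ref{th:140515-1501} applied to $\Phi:\X\times\Y\to\Z$. Three things must be checked: $\Phi$ is $C^r$ for $r$ exceeding the Fredholm indices that appear; at every zero, $D_x\Phi$ is Fredholm and $D\Phi$ is surjective; and the projection $(u,g)\mapsto g$ restricted to $\Phi^{-1}(\{0\})$ is $\sigma$-proper.

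For smoothness, only $\hat f$ is nonlinear, the other summands being bounded linear maps. The interpolation embedding $\X\hookrightarrow C^{0,\delta}_B(\IR,X^\alpha)$ is continuous, so Lemma~\ref{le:140514-1755} together with the higher-derivative bounds of (d) yields $\Phi\in C^\infty(\X\times\Y,\Z)$. For the Fredholm property I would fix $(u,g)\in\Phi^{-1}(\{0\})$ and argue that since $u\in\X$ is bounded and $g(t)\to 0$ as $|t|\to\infty$, the skew-product framework of the preceding section (absorbing $g$ into the admissible tail) together with Lemma~\ref{le:140307-1751} produces equilibria $e^\pm$ of the limit equations such that $u(t)\to e^\pm$. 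Hypothesis (a) then guarantees $B(t):=Df(t,u(t))\to Df^\pm(e^\pm)$, and hence $v_t+Av-B(t)v=0$ admits exponential dichotomies on each half-line. Standard Fredholm theory for asymptotically hyperbolic parabolic operators gives that $D_x\Phi(u,g):v\mapsto v_t+Av-B(t)v$ is Fredholm of index $m(e^-)-m(e^+)$. Since this index is uniformly bounded by the maximal Morse index of the (finitely many) equilibria relevant to any given bounded trajectory, choosing $r$ above this bound handles the index hypothesis.

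For surjectivity of $D\Phi(u,g)$, the decomposition $D\Phi(u,g)(v,k)=v_t+Av-B(t)v-k$ identifies the image with $\im D_x\Phi+\Y$, so I would invoke Theorem~\ref{th:131206-1658} (proved via Lemma~\ref{le:131206-1558}) which asserts exactly this surjectivity. Geometrically, the half-line dichotomies at $\pm\infty$ plus the action of the linear evolution operator between their associated invariant subspaces allow perturbations $k\in\Y$ supported in bounded time windows to hit the finite-dimensional cokernel of $D_x\Phi$. Taking this theorem as input, surjectivity is immediate.

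For $\sigma$-properness I would set $V_n:=\{(u,g)\in\Phi^{-1}(\{0\}):\norm{u}_\X\leq n\}$ and verify that each $p:V_n\to\Y$ is proper by a standard parabolic compactness argument: given $g_k\to g$ in $\Y$ and $(u_k,g_k)\in V_n$, uniform bounds on $u_k$ in $C^{1,\delta}_B(\IR,X)\cap C^{0,\delta}_B(\IR,X^1)$ combined with the compactness of $X^1\hookrightarrow X^\alpha$ (here using that $A$ has compact resolvent) yield via Arzelà--Ascoli a subsequence converging locally uniformly in $X^\alpha$; passing to the limit in the equation identifies the limit as an element of $\X$ and promotes the convergence, while the tails at $|t|\to\infty$ are controlled by the exponential attraction near the hyperbolic limits $e^\pm$. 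The main obstacle is the surjectivity step, whose heart is the construction in Lemma~\ref{le:131206-1558}: producing perturbations confined to the restrictive class $\Y$ (Hölder in time, vanishing at infinity, $X^\beta$-valued) that nevertheless suffice to cover the cokernel of $D_x\Phi$.
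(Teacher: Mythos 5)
You have proved the wrong statement. The theorem you were asked to establish is the abstract transversality theorem itself --- a piece of pure Banach-manifold differential topology: given a $C^r$ map $\Phi:X\times Y\to Z$ with $D_x\Phi$ Fredholm of index $<r$ at each zero, $D\Phi$ surjective at each zero, and $\sigma$-proper projection to the parameter space, the parameters for which $0$ is a regular value of the section form a residual set. A proof of this statement would run through the implicit function theorem (to exhibit $\Phi^{-1}(\{0\})$ as a $C^r$ Banach manifold), show that the projection $p:\Phi^{-1}(\{0\})\to Y$ is a Fredholm map of the same index as $D_x\Phi$, invoke the Sard--Smale theorem to obtain residuality of the regular values of each proper restriction $p_n$, and finally identify regular values of $p$ with parameters at which $\Phi(\cdot,y)$ has $0$ as a regular value. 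None of this appears in your write-up.

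What you have instead written is a sketch of the proof of Theorem~\ref{th:131127-1503}, i.e.\ the \emph{application} of Theorem~\ref{th:140515-1501} to the concrete operator $\Phi_f$: you verify smoothness of $\hat f$, Fredholmness of $D_x\Phi$ via the asymptotic dichotomies, surjectivity via Theorem~\ref{th:131206-1658}/Lemma~\ref{le:131206-1558}, and $\sigma$-properness via parabolic compactness. That is the content of a different proof in the paper, not of this theorem. For the record, the paper does not prove Theorem~\ref{th:140515-1501} either; it cites it as a simplified version of Theorem~2.1 in Brunovsk\'y--Pol\'a\v{c}ik (and Theorem~5.4 in Henry's boundary-perturbation monograph). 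If you want to supply an original proof, the Sard--Smale route above is the one to take.
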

  
  Using Lemma \ref{le:140514-1755}, it is easy to see that $\Phi$ is $C^\infty$. In particular,
  we have
  \begin{equation*}
    D\Phi(u_0,v_0)(u,v) = u_t + Au - D\hat f(u_0)u - v.
  \end{equation*}

  Now, suppose that $\Phi(u_0,v_0) = 0$, that is, $u_0$ is
  a solution of 
  \begin{equation*}
    u_t + Au = \hat f(u) + v_0.
  \end{equation*}
  Under the assumptions of Theorem \ref{th:131127-1503},
  it follows from Lemma \ref{le:140307-1751}
  that $u(t)$ converges to a (hyperbolic) equilibrium $e^\pm$ of the
  respective limit equation as $t\to\pm\infty$. 

  \begin{proof}[Proof of Theorem \ref{th:131127-1503}]
    Initially, define
    \begin{equation*}
      \X_n := \{x\in \X:\; \norm{x(t)}_\alpha < n\;\text{ for all } t\in\IR\}\quad n\in\IN.
    \end{equation*}
    It is clear that $\X = \bigcup_{n\in\IN} \X_n$.

    Since each equilibrium of \eqref{eq:140410-1558} is hyperbolic, 
    there are only finitely many equilibria
    $e$ with $\norm{e}_\alpha \leq n$.
    Hence, there is an $m\in\IN$ such that $m(e)\leq m$
    whenever $e$ is an equilibrium of \eqref{eq:140410-1558}
    with $\norm{e}_\alpha \leq n$.

    Furthermore, there is an $\eps=\eps(n)>0$ such that
    $\norm{e-e'}_\alpha>2\eps$ for every pair $(e,e')$ of equilibria with
    $\norm{e}_\alpha\leq n$ and $\norm{e'}_\alpha\leq n$. Define
    \begin{equation*}
      \X_{n,m} := \{x\in \X_n:\; x(t)\in \bigcup_e B_\eps[e]\text{ for }\abs{t}\geq m\},
    \end{equation*}
    where the union is taken over all equilibria $e$ with $\norm{e}_\alpha\leq n$.
    
    Let $(u_0, y_0)\in \X_n\times \Y$ be a solution of $\Phi(u_0,y_0) = 0$.
    By our assumptions and \cite[Lemma 4.a.11]{brunpol}, assumption
    (CH) in Lemma \ref{le:131206-1558} is satisfied. Hence,
    it follows from Theorem \ref{th:131206-1658} and Lemma \ref{le:140327-1816}
    that for every solution $u_0\in \X_n$, $D_x\Phi(u_0,y_0):\; \X\to \Z$ is 
    a Fredholm operator and its (Fredholm) index is bounded by $m$. Furthermore,
    \begin{equation}
      \label{eq:140317-1627}
      L(u,v) := u_t + Au - D_u f(t,u_0)u + v
    \end{equation}
    defines a surjective operator $\X\times W\to \Z$,
    where $W = \spn\{w_1, \dots, w_m\}$ and $w_1,\dots, w_m\in \Y$
    have compact support. 

    In order to apply Theorem \ref{th:140515-1501}, we need
    to show that the map $(x,y)\mapsto y:\; \Phi^{-1}(\{0\})\to \Y$ is
    $\sigma$-proper, that is, there is a family $(V_n)_n$ with
    $\Phi^{-1}(\{0\}) = \bigcup_{n\in\IN} V_n$ such that for each $n\in\IN$
    the map 
    \begin{equation}
      \label{eq:131219-1128}
      (x,y)\mapsto y:\; V_n \to \Y
    \end{equation} 
    is proper.

    Let $(x,y)\in \Phi^{-1}(\{0\})$ with $x\in \X_n$. Since $y(t)\to 0$
    as $t\to\pm\infty$, $x$ converges to an equilibrium as $t\to\pm\infty$ (Lemma \ref{le:140307-1751}).
    Hence, 
    \begin{equation*}
       \Phi^{-1}(\{0\}) = \bigcup_{(n,m)\in\IN\times\IN} \big(\Phi^{-1}(\{0\})\cap (\underbrace{\X_{n,m}\times \Y}_{=:V_{n,m}})\big).
    \end{equation*}
    Let
    $(x_n, y_n)$ be a sequence in $V_{n,m}$ with $y_n\to y_0$ in $\Y$.
    Using the compactness of the evolution operators on $X^\alpha$ defined
    by 
    \begin{equation*}
      u_t + Au = f(t,u) + y\qquad y\in \Y,
    \end{equation*}
    it follows that there is a solution $x_0:\;\IR\to X^\alpha$ and a subsequence
    $(x'_n)_n$ such that $x'_n\to x_0$ uniformly on bounded sets. Suppose that
    the convergence is not uniform with respect to $t\in\IR$. 
    In this case, there are a subsequence $(x''_n)_n$,
    a sequence $(t_n)_n$ and an $\eta>0$ such that $\norm{x''_n(t_n)-x_0(t_n)}\geq \eta$
    for all $n\in\IN$. Moreover, we can assume without loss of generality
    that $t_n\to\infty$ or $t_n\to-\infty$.

    By the choice of $V_{n,m}$, there are equilibria $e^\pm$
    with $x''_n(t)\in B_\eps(e^\pm)$ for all $t$ with $\abs{t}\geq m$. Hence, one has
    $x_0(t)\in B_\eps[e^\pm]$ for $\abs{t}\geq m$. 
	Using
    assumption (c) of Theorem \ref{th:140410-1618} and \cite[Theorem 47.5]{sellyou},
    it follows that there is a solution $u:\IR\to B_\eps[e]$ (either $e=e^+$ or $e=e^-$) of
    one of the limit equations such that $\norm{u(0)-e}_\alpha\geq\eta>0$.
    We can assume without loss of generality that $B_\eps[e]$ is
    an isolating neighborhood for $e$, which means that $u\equiv e$.
    This is an obvious contradiction, so 
    \begin{equation*}
      \sup_{t\in\IR} \norm{x_n(t)-x_0(t)}_\alpha \to 0\text{ as }n\to\infty.
    \end{equation*}

    By \cite[Lemma 4.a.6]{brunpol},
    one has $x_n\to x_0\in\X$, which proves that the map defined by \eqref{eq:131219-1128}
    is proper.
    
    Now, it follows from Theorem \ref{th:140515-1501} that there
    is a residual subset $\Y_n \subset \Y$ such that for every $y\in \Y_n$,
    $0$ is a regular value of $\Phi(.,y):\; \X_n\to\Z$.
    
    This completes the proof since a countable intersection of residual sets is 
    residual.
  \end{proof}

  \begin{lemma}
    \label{le:131206-1352}
    For every $F\in\mathcal{L}(\IR^n,\IR^n)$ with
    $\det F>0$, there is an $\hat F\in C^{\infty}(\left[0,1\right], \mathcal{L}(\IR^n,\IR^n))$
    such that $\hat F(0) = \id$, $\hat F(1) = F$, and $\det F(t)>0$ for all $t\in\left[0,1\right]$.
  \end{lemma}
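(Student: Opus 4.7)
The plan is to show that $GL^+(n,\IR)$ is smoothly path-connected to $\id$ by means of polar decomposition. Since $\det F>0$, write $F = QP$ with $P := (F^T F)^{1/2}$ symmetric positive definite and $Q := FP^{-1}$ orthogonal with $\det Q = +1$. The idea is to build smooth paths from $\id$ to $P$ and from $\id$ to $Q$ separately and combine them multiplicatively.

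For the symmetric factor, the straight-line interpolation $P(t) := (1-t)\id + tP$ stays symmetric positive definite throughout $t\in[0,1]$ because the cone of positive-definite matrices is convex; in particular $\det P(t)>0$. For the rotation factor, every $Q\in SO(n)$ equals $\exp X$ for some skew-symmetric $X$ — either by invoking surjectivity of the exponential map on the compact connected Lie group $SO(n)$, or directly via the block-diagonal normal form of orthogonal matrices, where each $2\times 2$ rotation by $\theta$ equals $\exp\bigl(\theta\bigl(\begin{smallmatrix} 0 & -1 \\ 1 & 0\end{smallmatrix}\bigr)\bigr)$. The path $Q(t) := \exp(tX)$ is then smooth in $SO(n)$ from $\id$ to $Q$. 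I would then define
\begin{equation*}
\hat F(t) := \exp(tX)\bigl((1-t)\id + tP\bigr),
\end{equation*}
so that $\hat F\in C^\infty([0,1], \mathcal{L}(\IR^n,\IR^n))$, $\hat F(0)=\id$, $\hat F(1) = QP = F$, and $\det\hat F(t) = \det\exp(tX)\cdot\det P(t) = 1\cdot\det P(t) > 0$ for all $t\in[0,1]$, which is what the conclusion requires (reading the final condition of the statement as $\det\hat F(t)>0$).

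There is essentially no serious obstacle: polar decomposition is smooth on $GL^+(n,\IR)$, the positive-definite cone is convex, and the exponential representation of rotations is classical. If one prefers to avoid even this small piece of Lie theory, a self-contained alternative is to choose an orthonormal basis bringing $Q$ to block normal form, replace each rotation by angle $\theta_j$ in its $2\times 2$ block by the smooth family $t\mapsto R_{t\theta_j}$, and conjugate back; this yields exactly the same path without further machinery.
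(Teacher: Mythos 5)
Your proof is correct and complete. The paper itself omits the proof of this lemma, so there is nothing to compare against; your polar-decomposition argument (linear interpolation in the convex cone of symmetric positive definite matrices for the factor $P$, and $\exp(tX)$ with $X$ skew-symmetric for the rotation factor $Q$, using $\det Q = \det F/\det P > 0$ to place $Q$ in $SO(n)$) is a standardigue and fully rigorous way to establish smooth path-connectedness of $GL^+(n,\IR)$, and you correctly read the statement's ``$\det F(t)>0$'' as the evident typo for $\det\hat F(t)>0$.
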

  
  The proof is omitted.

  \begin{lemma}
    \label{le:131206-1558}
    Suppose that:
    \begin{enumerate}
      \item[(CH)] 
        $B\in C^{0,\delta}(\mathcal{L}(X^\alpha,X))$ with $B(t)\to B^+$ 
        as $t\to\infty$ and $B(t)\to B^-$ as $t\to-\infty$.
        There further exists an $m^+\in\IN$ (resp. $m^-$) such that 
        the evolution operator defined by solutions of
        \begin{equation}
          \label{eq:131205-1729}
          u_t + Au = B^+ u \text{ (resp. $B^- u$) }
        \end{equation}
        admits an exponential dichotomy $P$ defined for $t\in\IR^+$ (resp. $t\in\IR^-$) with 
        $\abs{t}$ large and $\dim \Ran(P) = m^+$ (resp. $m^-$).
    \end{enumerate}

    If $m^- = m^+ =: m$, then there exist $t_1\leq t_2$
    and an $R\in C^{\infty}(\left[t_1,t_2\right],\mathcal{L}(X^\alpha, X))$ such that 
    there does not exist a bounded non-trivial (mild) solution of
    \begin{equation}
      \label{eq:131206-1213}
       u_t + Au = B(t)u + \begin{cases}
        R(t)u & t\in\left[t_1,t_2\right]\\
        0 & \text{otherwise.}
      \end{cases}
    \end{equation}
  \end{lemma}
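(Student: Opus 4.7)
First I would choose $\tau > 0$ large enough that, by hypothesis (CH), the evolution operator $T_0$ of $u_t + Au = B(t)u$ admits exponential dichotomies $(P^-(t))_{t\leq -\tau}$ and $(P^+(t))_{t\geq \tau}$ with $\dim\Ran P^\pm = m$. I set $t_1 := -\tau$ and $t_2 := \tau$, so the perturbation will live exactly on the transition region. The standard dichotomy argument shows that any bounded mild solution $u$ of \eqref{eq:131206-1213} must satisfy $u(-\tau)\in\Ran P^-(-\tau)$ (from boundedness on $(-\infty,-\tau]$, where the equation is unperturbed) and $u(\tau)\in\Ran(I-P^+(\tau))$. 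Writing $T_R(\tau,-\tau)$ for the evolution of the perturbed equation across $\cli{-\tau,\tau}$, the space of bounded solutions of \eqref{eq:131206-1213} is then in bijection with $\{v\in\Ran P^-(-\tau):\;T_R(\tau,-\tau)v\in\Ran(I-P^+(\tau))\}$; its triviality is equivalent to the $m$-dimensional subspace $T_R(\tau,-\tau)\Ran P^-(-\tau)$ being complementary to $\Ran(I-P^+(\tau))$ in $X^\alpha$.

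My concrete target is to force
\begin{equation*}
  T_R(\tau,-\tau)\Ran P^-(-\tau) = \Ran P^+(\tau),
\end{equation*}
which suffices because $\Ran P^+(\tau)\oplus\Ran(I-P^+(\tau)) = X^\alpha$ by definition of the dichotomy at $\tau$. To this end I would pick bases $(u_i)_{i=1}^m$ of $\Ran P^-(-\tau)$ and $(w_i)_{i=1}^m$ of $\Ran P^+(\tau)$, both taken in $X^1$ by parabolic regularity on the finite-dimensional invariant subspaces (or, if desired, after a harmless perturbation inside $X^1$, since everything reduces to an open transversality property). Embedding both tuples into a common finite-dimensional subspace $E\subset X^1$ of dimension at least $2m$, I then apply Lemma \ref{le:131206-1352} to the resulting change-of-basis matrix (after, if needed, swapping two vectors so the determinant is positive) to obtain smooth curves $\phi_i:\cli{-\tau,\tau}\to E$ with $\phi_i(-\tau) = u_i$, $\phi_i(\tau) = w_i$ and $(\phi_i(t))_{i=1}^m$ linearly independent for every $t$.

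Given this smooth frame, I turn the $\phi_i$ into solutions of the perturbed equation by reading off $R$. Fix a smooth family of projections $\Pi(t):\;X^\alpha\to V(t):=\spn\{\phi_i(t)\}$ (via a smooth dual basis inside $E$, extended to $(X^\alpha)^*$ in a fixed way) and set
\begin{equation*}
  R(t)x := \sum_{i=1}^m c_i(t,x)\bigl(\dot\phi_i(t) + A\phi_i(t) - B(t)\phi_i(t)\bigr),
\end{equation*}
where $c_i(t,x)$ are the coordinates of $\Pi(t)x$ in the basis $(\phi_i(t))$. By design $R(t)\phi_i(t) = \dot\phi_i + A\phi_i - B\phi_i$, so each $\phi_i$ solves \eqref{eq:131206-1213} on $\cli{-\tau,\tau}$. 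Consequently $T_R(\tau,-\tau)u_i = w_i$ for each $i$, and the required complementarity holds; injectivity of $T_R(\tau,-\tau)$ on $\Ran P^-(-\tau)$, together with the unique solvability of the perturbed equation, then forces $u\equiv 0$.

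The main obstacle is regularity: as just defined, $R$ inherits only the Hölder regularity of $B$ in $t$, whereas the lemma requires $R\in C^\infty$. I would address this by observing that the condition ``$T_R(\tau,-\tau)\Ran P^-(-\tau)\cap\Ran(I-P^+(\tau)) = \{0\}$'' is \emph{open} in $R$: the evolution $T_R$ depends continuously on $R$ in any reasonable operator-path topology on $\cli{-\tau,\tau}$, and transversality of a fixed closed subspace with a continuously varying finite-dimensional one is preserved under small perturbations. Hence a standard $t$-mollification of the Hölder $R$ constructed above yields a $C^\infty$ approximant still realising the required complementarity, which completes the proof.
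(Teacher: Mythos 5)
Your proof is correct, and it takes a genuinely different route at the key technical step. Both you and the paper reduce the lemma to steering the $m$-dimensional unstable space at $t_1$ onto a complement of the stable bundle at $t_2$, using Lemma \ref{le:131206-1352} to produce the connecting path of invertible maps inside a finite-dimensional subspace of $X^1$ (the paper, like you, must push $t_2$ forward so that this complement lies in $X^1$; note that your parenthetical alternative of perturbing the bases off the invariant subspaces would destroy the exact identity $T_R(t_2,t_1)\Ran P^-(t_1)=\Ran P^+(t_2)$, so the parabolic-regularity route is the one to keep). The difference lies in how the path is realized by a perturbation of the equation: the paper constructs a full modified evolution operator on all of $X$, which forces it to invoke Lemma \ref{le:131216-1436} to produce a bounded $B'$ with $A-B'$ vanishing on the relevant finite-dimensional space and preserving a complement, and then reads off $R(t)=B'+G_t(t)G(t)^{-1}\tilde P-B(t)$. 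You instead prescribe only the $m$ solution curves $\phi_i$ and encode them in a finite-rank $R$; this suffices because a bounded solution of \eqref{eq:131206-1213} must enter $\left[t_1,t_2\right]$ through $\Ran P^-(t_1)$ and exit through $\Null P^+(t_2)$, so the action of $T_R$ off the unstable space is irrelevant. Your version therefore dispenses with Lemma \ref{le:131216-1436} entirely, at the price of a slightly less explicit evolution operator. Finally, your concern about the $C^\infty$ requirement is legitimate but applies equally to the paper: its $R$ contains the summand $-B(t)$ and is likewise only Hölder in $t$, so either one accepts that Hölder regularity of $R$ is all that is used downstream, or one runs precisely the mollification-plus-openness argument you supply, which is sound since $R\mapsto T_R(t_2,t_1)$ is continuous in the uniform operator topology and transversality of a continuously varying $m$-plane with a fixed closed subspace of codimension $m$ is an open condition.
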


  \begin{lemma}
    \label{le:131216-1436}
    Suppose that $A$ is a positive sectorial operator having
    compact resolvent. Let $X_1\subset X^1 = \Def(A)$ be an arbitrary
    finite-dimensional subspace.

    Then, there are a closed subspace $X_2\subset X$ and $B'\in\mathcal{L}(X,X)$
    such that $X = X_1\oplus X_2$, $(A-B')x = 0$ for 
    all $x\in X_1$, and $(A-B')x \in X_2$ for all $x\in X_2\cap \Def(A)$.
  \end{lemma}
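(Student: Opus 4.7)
The plan is to build a bounded rank-$n$ perturbation $B'$ so that $X_1$ lies in $\ker(A-B')$ and some closed complement $X_2$ is invariant under $A-B'$. The key trick is to realise the topological projection onto $X_1$ by functionals lying in $\Def(A^*)$, so that $\pi_1\circ A$ extends to a bounded operator on all of $X$.

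First fix a basis $e_1,\dots,e_n$ of $X_1\subset\Def(A)$. Since $X$ is reflexive and $A$ is a densely defined closed operator, $A^*$ is densely defined in $X^*$. By Hahn--Banach pick $\psi_i\in X^*$ with $\psi_i(e_j)=\delta_{ij}$; approximate each $\psi_i$ in norm by some $\tilde\psi_i\in\Def(A^*)$. For sufficiently close approximations the matrix $M=(\tilde\psi_i(e_j))$ is close to the identity and therefore invertible, and setting $\phi_i:=\sum_k (M^{-1})_{ik}\tilde\psi_k$ produces $\phi_1,\dots,\phi_n\in\Def(A^*)$ with $\phi_i(e_j)=\delta_{ij}$. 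Define $\pi_1 x:=\sum_i \phi_i(x)\,e_i$, so $\pi_1\in\LIN(X,X)$ is a continuous projection onto $X_1$; put $X_2:=\ker\pi_1$, which is closed and satisfies $X=X_1\oplus X_2$.

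Now introduce two bounded operators $X\to X$:
\begin{equation*}
T_1 x := \sum_i \phi_i(x)\,Ae_i = A\pi_1 x, \qquad T_2 x := \sum_j (A^*\phi_j)(x)\,e_j.
\end{equation*}
Both are bounded since the vectors $e_i, Ae_i$ and functionals $\phi_j, A^*\phi_j$ are fixed, and the crucial point is that for $x\in\Def(A)$ one has $T_2 x = \sum_j \phi_j(Ax)\,e_j = \pi_1 A x$. Set
\begin{equation*}
B' := T_1 + T_2(I-\pi_1)\in \LIN(X,X).
\end{equation*}

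It remains to check the two claimed identities. For $x\in X_1$ one has $\pi_1 x=x$ and $(I-\pi_1)x=0$, hence $B'x=A\pi_1 x=Ax$, so $(A-B')x=0$. For $x\in X_2\cap\Def(A)$ one has $\pi_1 x=0$, so $T_1 x=0$ and $T_2(I-\pi_1)x=T_2 x=\pi_1 Ax$; therefore $(A-B')x=Ax-\pi_1 Ax\in\ker\pi_1=X_2$. The only substantive input is the norm-density of $\Def(A^*)$ in $X^*$ used to find biorthogonal functionals in $\Def(A^*)$; that is where reflexivity enters, and everything else is elementary.
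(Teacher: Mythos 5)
Your proof is correct, but it takes a genuinely different route from the paper. The paper fixes an arbitrary continuous projection $P$ onto $X_1$, observes that $A+\mu I-AP=(A+\mu I)(I-R(\mu,A)AP)$ is invertible with compact inverse $R'(\mu)$ for large $\mu>0$, notes that $1/\mu$ is an eigenvalue of $R'(\mu)$ whose generalized eigenspace $X_1'$ contains $X_1$, and uses the Riesz spectral decomposition $X=X_1'\oplus X_2'$ of the compact operator $R'(\mu)$ to manufacture the invariant complement; the perturbation is then $B'=AP+A(I-P)Q$. This argument leans essentially on the compact resolvent. You instead produce biorthogonal functionals $\phi_i\in\Def(A^*)$ (available because $\Def(A^*)$ is norm-dense in $X^*$ when $X$ is reflexive and $A$ is densely defined and closed --- reflexivity is a standing assumption of the paper, so this is legitimate), which makes both $x\mapsto A\pi_1 x$ and the extension of $x\mapsto\pi_1 Ax$ bounded finite-rank operators, and then the explicit formula $B'=A\pi_1+T_2(I-\pi_1)$ with $T_2x=\sum_j(A^*\phi_j)(x)e_j$ verifies both required identities by direct computation. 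Your argument is more elementary and more general: it never uses compactness of the resolvent or sectoriality, only closedness, dense definition, and reflexivity. The complements produced differ ($\ker\pi_1$ versus a spectrally defined subspace), but everything downstream in the paper (the construction of the modified evolution operator in Lemma \ref{le:131206-1558}) uses only the stated conclusion together with boundedness of $B'$, so either construction serves.
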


  \begin{proof}
  	The claim is trivial for $X_1=\{0\}$, so we will assume
  	that $X_1\neq\{0\}$.
  	
  	Let $P\in\mathcal{L}(X,X_1)$ denote an otherwise arbitrary projection, and 
  	let $R(\mu,A)\in\mathcal{L}(X,X)$ denote the resolvent of $A + \mu I$.
    We have \cite[Theorem 5.2 in Chapter 2]{pazy}
    \begin{equation*}
      \norm{R(\mu,A)} \leq \frac{M}{\abs{\mu}},
    \end{equation*}
    so every real $\mu>0$ sufficiently large is
    in the resolvent set of
    \begin{equation}
      \label{eq:131216-1453}
      A + \mu I - AP = (A+\mu I)(I - R(\mu,A)AP).
    \end{equation}

    Moreover, the resolvent $R'(\mu)$ of \eqref{eq:131216-1453}
    is compact, and $\frac{1}{\mu}$ is an eigenvalue 
    of $R'(\mu)$. Let $X = X'_1 \oplus X'_2$ be the associated
    decomposition of $X$, where $X'_1\supset X_1$ is the generalized eigenspace
    associated with $\frac{1}{\mu}$ and $X'_2$ is $R'(\mu)$ invariant.

    Finally, let $Q\in\mathcal{L}(X,X'_1)$ denote the projection
    with kernel $X'_2$. The operator 
    \begin{equation*}
      A - \underbrace{\left( AP + A(I-P)Q \right)}_{=:B'}
    \end{equation*}
    vanishes on $X'_1$. Let $C$ satisfy the relation
    $X'_1 = X_1 \oplus C$, and set $X_2 := C\oplus X'_2$.
  \end{proof}

  \begin{proof}[Proof of Lemma \ref{le:131206-1558}]
    \renewcommand{\S}{\mathcal{S}}
    \newcommand{\U}{\mathcal{U}}

    Let the evolution operator $T(t,s)$ be defined by
    \begin{equation*}
      u_t + Au = B(t)u,
    \end{equation*}
    and consider the bundles
    \begin{align*}
      \U := \{&(s,x)\in\IR\times X^\alpha:\; \text{there exists a solution }u:\;\IR^-\to X\text{ with }u(s)=x\\
      &\text{ and }\sup_{t\in\IR^-} \norm{u(t)}_\alpha<\infty\}\\
      \S := \{&(s,x)\in\IR\times X^\alpha:\; \sup_{t\in\IR^+} \norm{T(t,s)x}_\alpha < \infty\}.
    \end{align*}

    $\U$ and $\S$ are positively invariant, that is, $(s,x)\in \U$ (resp. $\S$) implies
    $(t,T(t,s)x)\in \U$ (resp. $\S$) for all $t\geq s$.

    It is well-known that, for small $t\in\IR$ (resp. large $t\in\IR$), 
    $\dim \U(t) = m$ and $\codim \S(t) = m$ (see for instance \cite[Lemma 4.a.11]{brunpol}).
    Choose $t_1<t_2$ such that $\dim \U(t) = m$ for all $t\leq t_1$
    and $\codim \S(t) = m$ for all $t\geq t_2$. 

    Let $X = \S(t_2) \oplus C_{\S}$, $X_1 := \U(t_1) + C_\S$,  and $X = X_1\oplus X_2$
    with $X_2\subset \S(t_2)$. For $t\geq s\geq t_2$, the evolution
    operator $T(t,s)$ induces an isomorphism $X/\S(s) \to X/\S(t)$, so
    $X = T(t,t_2)C_\S \oplus \S(t+t_2)$ for every $t\in\IR^+$. 
    By standard regularity results and choosing
    $t_2$ larger if necessary, we can thus assume without loss of generality that
    $C_\S\subset X^1$ so that $X_1 = \U(t_1) + C_\S \subset X^1$.

    Let $F:\;X_1\to X_1$ be a linear endomorphism with $\det F>0$ which
    takes $\U(t_1)$ to $C_\S$, let $\hat F$ be given by Lemma \ref{le:131206-1352},
    and set $G(t_1 + \xi (t_2 - t_1)) := \hat F(\xi)$ for $\xi\in\left[0,1\right]$.
    Let $B'$ be defined by Lemma \ref{le:131216-1436}, and let $X = X_1 \oplus \tilde X_2$
    with an $(A-B')$-invariant complement $\tilde X_2$. $\tilde P\in\mathcal{L}(X,X_1)$ denotes
    the projection along $\tilde X_2$. Consider the semigroup $S(t)$
    defined by
    \begin{equation*}
      \dot u + Au = B'u.
    \end{equation*}

    We can now define the modified evolution operator $\hat T(t,s)$ by
    \begin{equation*}
      \hat T(t,s)(x) :=
      \begin{cases}
        G(t)G(s)^{-1} x & x\in X_1\text{ and }\left[s,t\right]\subset \left[t_1, t_2\right]\\
        S(t-s) x & x\in \tilde X_2\text{ and }\left[s,t\right]\subset \left[t_1, t_2\right]\\
        T(t,s) x & \left[s,t\right]\cap \left]t_1, t_2\right[ = \emptyset.
      \end{cases}
    \end{equation*}
    
    One has $\hat T(t_2,t_1)x = F(x)$ for all $x\in\U(t_1)$,
    so $\hat T(t_2, t_1)\U(t_1) \subset C_\S$, which proves that there does not exist
    a full bounded solution of $\hat T$.

    Assume that $u$
    is a solution of $\hat T$ defined for $t\in\left]a,b\right[\subset\left[t_1,t_2\right]$.
    We have
    \begin{align}
       \tilde Pu_t 
         \label{eq:171130-1421a}
			       &= \underbrace{(-A + B')\tilde P u}_{=0} + G_t(t)G(t)^{-1} \tilde Pu \\
      \label{eq:171130-1421b}
      (1-\tilde P)u_t &= (-A + B') (1-\tilde P)u, 
    \end{align}
    where the term $(-A + B')\tilde P u$ has been added deliberately.
    Consequently, every solution of $\hat T(t,s)$ is also a solution of \eqref{eq:131206-1213},
    where 
    \begin{equation*}
      R(t) := B' + G_t(t)G(t)^{-1}\tilde P - B(t)
    \end{equation*}
    is obtained by comparing the sum of \eqref{eq:171130-1421a} and \eqref{eq:171130-1421b}
    with \eqref{eq:131206-1213}.
  \end{proof}

  \begin{lemma}
    \label{le:131219-1505}
    Let $B\in L^\infty(\IR, \mathcal{L}(X^\alpha, X))$
    with $B(t)\to B^+$ 
    as $t\to\infty$ and $B(t)\to B^-$ as $t\to-\infty$.

    Assume there exists an $m\in\IN$ such that each of
    the evolution operators defined by solutions of
    \begin{equation*}
      u_t + Au = B^+ u \text{ (resp. $B^- u$) }
    \end{equation*}
    admits an exponential dichotomy $P$ defined for $t\in\IR^+$ (resp. $t\in\IR^-$) with 
    $\abs{t}$ large and $\dim \Ran(P) = m$.

    Moreover, suppose that the only bounded mild solution $u:\;\IR\to X^\alpha$
    of 
    \begin{equation}
      \label{eq:140325-1535}
      u_t + Au = B(t)u
    \end{equation}
    is $u\equiv 0$.
    
    Then, for every $h\in L^\infty(\IR, X)$, there is a unique 
    mild solution $u_0\in C_B(\IR,X^\alpha)$ of 
    \begin{equation}
      \label{eq:140325-1530}
      u_t + Au = B(t)u + h.
    \end{equation}
  \end{lemma}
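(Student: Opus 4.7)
\textbf{Uniqueness} is immediate: if $u_1,u_2\in C_B(\IR,X^\alpha)$ both solve \eqref{eq:140325-1530}, then $v:=u_1-u_2$ is a bounded mild solution of the homogeneous equation \eqref{eq:140325-1535}, which by hypothesis forces $v\equiv 0$. So the whole task reduces to producing one bounded mild solution.

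\textbf{Existence} will be obtained by showing that the evolution operator $T_B(t,s)$ of \eqref{eq:140325-1535} admits an exponential dichotomy on all of $\IR$; the solution is then the usual Green's function formula. First, since $B(t)\to B^\pm$ and each limit equation admits an exponential dichotomy on a halfline with $\dim\Ran P = m$, the roughness results of \cite{brunpol} (used throughout this paper via \cite[Lemma 4.a.11]{brunpol}) give $T>0$ and projections $P^+(t)$, $t\geq T$, and $P^-(t)$, $t\leq -T$, of rank $m$ realizing exponential dichotomies for $T_B$ on $[T,\infty)$ and $(-\infty,-T]$. Define, as in Lemma \ref{le:131206-1558}, the bundles
\begin{align*}
\U(s) &:= \{x\in X^\alpha:\text{ there is a bounded solution }u:(-\infty,s]\to X^\alpha\text{ of \eqref{eq:140325-1535} with }u(s)=x\},\\
\S(s) &:= \{x\in X^\alpha:\; T_B(\cdot,s)x\text{ is bounded on }[s,\infty)\}.
\end{align*}
Standard facts (cf.\ \cite[Lemma 4.a.11]{brunpol}) give $\dim\U(s)=m$, $\codim\S(s)=m$, and $\S(s)$ is closed.

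The key step is to use the hypothesis that $u\equiv 0$ is the only bounded solution, which is exactly the statement that $\U(s)\cap\S(s)=\{0\}$ for every $s\in\IR$. Because $\U(s)$ is $m$-dimensional and $\S(s)$ has codimension $m$, the natural linear map $\U(s)\to X^\alpha/\S(s)$ is an injection between spaces of equal finite dimension, hence an isomorphism, so $X^\alpha=\U(s)\oplus\S(s)$. Let $P(s)$ denote the (bounded, because one summand is finite-dimensional) projection onto $\U(s)$ along $\S(s)$; invariance under $T_B$ together with the halfline dichotomies and a standard interpolation on the compact interval $[-T,T]$ upgrades $(P(s))_{s\in\IR}$ to an exponential dichotomy on all of $\IR$ for $T_B$ (this is the content of \cite[Corollary 4.b.4]{brunpol}).

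Once the global dichotomy is in hand, set
\begin{equation*}
u_0(t) := \int_{-\infty}^{t} T_B(t,s)\bigl(I-P(s)\bigr)h(s)\,\de s - \int_{t}^{\infty} T_B(t,s)P(s)h(s)\,\de s,
\end{equation*}
where the second integral uses the inverse $T_B(t,s)$ on $\Ran P(s)$ as in Definition \ref{df:140519-1839}. The bounds in Definition \ref{df:140519-1839}, combined with the parabolic smoothing estimate $\norm{A^\alpha T_B(t,s)(I-P(s))}\leq C(t-s)^{-\alpha}e^{-\gamma'(t-s)}$, make both integrals converge in $X^\alpha$ and give $\norm{u_0(t)}_\alpha\leq C'\norm{h}_{L^\infty}$, so $u_0\in C_B(\IR,X^\alpha)$. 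A direct computation (Duhamel) shows $u_0$ is a mild solution of \eqref{eq:140325-1530}.

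\textbf{Main obstacle.} The one step that is not purely bookkeeping is the passage from dichotomies on the two halflines plus the splitting $X^\alpha=\U(s)\oplus\S(s)$ at a single time to a genuine exponential dichotomy on all of $\IR$ with uniform constants; the dimension count $\dim\U=\codim\S=m$ is essential here (it is what ensures $\U+\S$ is closed and has bounded complementary projection in the infinite-dimensional space $X^\alpha$), and the equal-rank hypothesis $m^-=m^+=m$ is precisely what allows the trivial-kernel assumption to translate into invertibility rather than mere Fredholmness of index zero. Once this is granted, everything else is routine semigroup estimation.
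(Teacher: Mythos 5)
Your proposal is correct and shares the paper's overall architecture: reduce everything to the existence of an exponential dichotomy for $T_B$ on all of $\IR$, and then write down the solution via the standard Green's-function formula of \cite[Theorem 7.6.3]{henry}. Where you differ is in how the full-line dichotomy is obtained. The paper embeds the equation into a skew-product semiflow over the compact base $W=\{\hat B^-,\hat B^+\}\cup\{B(t):t\in\IR\}$ (via \cite[Theorem 44.3]{sellyou}) and then invokes the Sacker--Sell dichotomy theorem \cite[Theorem C]{sacker_sell_dich}, which converts ``no nontrivial bounded solution over the base'' directly into a dichotomy with uniform constants. You instead build the dichotomy by hand: halfline dichotomies from roughness, the bundles $\U(s)$ and $\S(s)$, the observation that the trivial-kernel hypothesis is exactly $\U(s)\cap\S(s)=\{0\}$, and the rank count $\dim\U(s)=\codim\S(s)=m$ forcing $X^\alpha=\U(s)\oplus\S(s)$. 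Your route is more elementary and more transparent about where the hypothesis $m^-=m^+$ enters (it is what turns injectivity into the splitting), and it is essentially the Brunovsky--Pol\'a\v{c}ik approach; the paper's route outsources the genuinely delicate step --- uniformity of the projection norms and decay rates across the matching region, including the fact that $\S(s)$ for $s\ll 0$ is not simply $\Ran(I-P^-(s))$ --- to Sacker--Sell, whereas you delegate it to \cite[Corollary 4.b.4]{brunpol}. Either citation is acceptable; just be aware that this gluing step is the only nontrivial content and should not be left entirely implicit if the proof were written out in full. Two minor points you glossed over but which do hold: $\codim\S(s)\leq m$ for intermediate $s$ needs the preimage argument $\S(s)=T_B(s_0,s)^{-1}\S(s_0)$, and $\dim\U(s)=m$ for all $s$ uses that $T_B(s,s_0)$ is injective on $\U(s_0)$, which itself follows from the trivial-kernel hypothesis (a killed vector would extend by zero to a nontrivial bounded full solution).
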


  \begin{proof}
    It follows from \cite[Theorem 44.3]{sellyou} that \eqref{eq:140325-1530}
    generates a skew-product semiflow on a suitable phase space $W\times X^\alpha$,
    where $W := \cl \{B(t):\; t\in\IR\}$, $p$ is a sufficiently large integer,
    and the closure is taken in $L^p_\loc(\IR, \mathcal{L}(X^\alpha, X))$.
    Note that $W = \{\hat B^-, \hat B^+\} \cup \{B(t):\; t\in\IR\}$, where
    $\hat B^\pm(t)\equiv B^\pm$.

    Now \cite[Theorem C]{sacker_sell_dich} implies that the evolution operator $T(t,s)$
    defined by mild solutions of \eqref{eq:140325-1535} 
    admits an exponential dichotomy. Our claim follows using the
    same formula as \cite[Theorem 7.6.3]{henry} (see also \cite[Lemma 4.a.7]{brunpol} and \cite[Lemma 4.a.8]{brunpol}).
  \end{proof}
    
  \begin{theorem}
    \label{th:131206-1658}
    Suppose that (CH) holds, and let $m:=\max\{m^-,m^+\}$. Then there are 
    $w_1,\dots, w_m\in \Y$ having compact support 
    such that the operator $\tilde L:\; \X + \spn\{w_1,\dots, w_m\} \to \Z$
    \begin{equation*}
      \tilde L(u,w) := L(u,w) = u_t + Au - B(t)u - w
    \end{equation*}
    is surjective.
  \end{theorem}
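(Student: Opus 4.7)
The plan is to combine the Fredholm property of the operator $L: \X \to \Z$, $Lu = u_t + Au - B(t)u$, with the geometric construction of Lemma \ref{le:131206-1558} in order to exhibit $m$ compactly supported elements of $\Y$ whose classes span a complement of $\Ran L$ in $\Z$. First I would invoke Lemma \ref{le:140327-1816} to conclude that $L$ is Fredholm with $\dim \mathrm{coker}\, L \leq m$; this follows from the hyperbolic theory at $\pm\infty$, e.g.\ from the bound $\dim \ker L^* \leq m^+$ obtained from the stable bundle of the time-reversed adjoint equation at what becomes its $-\infty$-limit.

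The heart of the argument is to show that every class in $\mathrm{coker}\, L$ admits a compactly supported representative in $\Y$. In the case $m^- = m^+ = m$, I would apply Lemma \ref{le:131206-1558} to obtain an operator-valued $R \in C^\infty([t_1,t_2], \mathcal{L}(X^\alpha, X))$, extended by zero outside $[t_1,t_2]$, for which the perturbed equation $u_t + Au = (B(t) + R(t))u$ admits only the trivial bounded solution. Lemma \ref{le:131219-1505} then promotes $L_R u := Lu - R(t)u$ to an isomorphism $\X \to \Z$. For any $h \in \Z$, solving $L_R u = h$ gives $Lu - h = R(t)u$, a compactly supported function in $\Z$, so $[h]\in\mathrm{coker}\, L$ is represented by the compactly supported $R(t)u$. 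In the case $m^- \neq m^+$, the same conclusion follows from a patching argument based on the exponential dichotomies of the limit equations: solve $Lv^\pm = h$ on the half-lines $(-\infty, -N]$ and $[N, \infty)$ for $N$ large, and combine via smooth cutoffs $\chi^\pm$ to produce $u := \chi^- v^- + \chi^+ v^+ \in \X$ with $Lu - h$ supported in a bounded interval.

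Having compactly supported representatives in $\Z$, I would promote them to $\Y = C^{0,\delta}_{B,0}(\IR, X^\beta)$ via the Yosida-type regularization $\tilde w \mapsto (I + \eta A)^{-1} \tilde w$, whose image lies in $C^{0,\delta}_c(\IR, X^1) \subset \Y$; for sufficiently small $\eta$, continuity of the quotient $\Z \to \Z/\Ran L$ combined with finite-dimensionality of $\mathrm{coker}\, L$ preserves the basis property of the representatives. Finally, I would choose $w_1,\dots,w_k \in \Y$ compactly supported so that $[w_1],\dots,[w_k]$ form a basis of $\mathrm{coker}\, L$ (with $k \leq m$), and pad with further compactly supported $\Y$-functions to obtain $m$ elements in total; the map $\tilde L(u,w) := Lu - w$ from $\X \oplus \spn\{w_1,\dots,w_m\}$ to $\Z$ is then surjective.

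The main technical obstacle is the regularization step, since pointwise convergence $(I+\eta A)^{-1} \to I$ in $X$ does not imply convergence in the Hölder norm of $\Z$. One must verify that the family $((I + \eta A)^{-1} \tilde w)_\eta$ is bounded in $\Z$ and depends continuously on $\eta$ in a suitably weak topology, so that its image in the finite-dimensional quotient $\Z/\Ran L$ varies continuously with $\eta$, ensuring that the basis property remains robust as $\eta \to 0$.
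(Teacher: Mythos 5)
Your first two ingredients match the paper: in the case $m^-=m^+$ the proof does invoke Lemma \ref{le:131206-1558} together with Lemma \ref{le:131219-1505} to solve $u_t+Au-B(t)u=R(t)u+h$ for a bounded $u_0$, and the resulting discrepancy $Lu_0-h=R(t)u_0$ is indeed supported in $[t_1,t_2]$. The gap is in how you pass from ``every $h$ is congruent modulo $\Ran(L)$ to a compactly supported function'' to ``a \emph{fixed} $m$-dimensional space of compactly supported $w_i$ suffices.'' You do this by asserting that $L$ is Fredholm with closed range and $\dim\mathrm{coker}\,L\le m$, citing Lemma \ref{le:140327-1816} and a duality with $\ker L^*$. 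But Lemma \ref{le:140327-1816} only bounds $\dim\Null(L)$ and says nothing about the range; and in the paper's logical order the Fredholm property of $L$ is \emph{deduced from} Theorem \ref{th:131206-1658} (see the proof of Theorem \ref{th:131127-1503}), so invoking it here is circular. The duality route is also not off the shelf: identifying $\mathrm{coker}\,L$ with the kernel of the formal adjoint requires closed range plus a comparison of the Banach-space adjoint of $L:\X\to\Z$ with the formal adjoint $L'$ acting on $\X'$, and the paper's Lemma \ref{le:140107-1702} only provides a one-way implication via integration by parts. Without closed range, even a finite codimension of $\overline{\Ran(L)}$ does not yield $\Ran(L)+\spn\{w_1,\dots,w_m\}=\Z$.

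The paper closes exactly this gap with the Sublemma inside the proof: the set of discrepancies $R(\cdot)u(\cdot)$ is infinite-dimensional, but its effect is measured only through the endpoint value $v_1(t_2)$ of the solution of $v_t+Av-B(t)v=R(t)u_0$, $v(t_1)=0$, taken modulo the exponentially stable subspace $X_2$ at $t_2$, whose codimension is $m^+\le m$; correcting that finite-dimensional component by a suitable $w_0\in\spn\{w_1,\dots,w_n\}$ leaves a decaying, hence bounded, remainder. The $w_i$ are built explicitly as $w=x_t(\cdot)u(\cdot)$ with $x$ a scalar cutoff and $u$ a solution hitting a basis of the complement $\tilde X_1$, which makes them automatically $X^1$-valued and H\"older, so the Yosida regularization you rightly flag as delicate in the $C^{0,\delta}$ norm is never needed. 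Your patching construction for $m^-\neq m^+$ is plausible for producing compactly supported representatives but runs into the same reduction problem; the paper instead equalizes the indices by augmenting the system with the scalar block $x_t+\mu\arctan(t)x$ and reduces to the equal-index case. To repair your argument you would need to supply the finite-dimensional reduction directly, essentially reproducing the Sublemma.
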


  \begin{proof}
    Consider the spaces
    \begin{align*}
      X' &:= \IR^{\abs{m^--m^+}} \times X\\
      (X')^\alpha &:= \IR^{\abs{m^--m^+}} \times X^\alpha\\
    \end{align*}
    and
    \begin{align*}
      \X' &:= C^{1,\delta}_B(\IR,X') \cap C^{0,\delta}_B(\IR,(X')^1) \\
      \Y' &:= C^{0,\delta}_{B,0}(\IR, \IR^{\abs{m^--m^+}}) \times \Y\\
      \Z' &:= C^{0,\delta}_B(\IR,X').
    \end{align*}
    We define an operator $L':\;\X'\times \Y'\to\Z'$, where
    \begin{equation*}
      L'((x,u),(w',w)) := (x_t + \mu \arctan(t)x - w', u_t + Au -B(t) u -w)
    \end{equation*}
    and $\mu = 1$ if $m^-<m^+$ and $\mu=-1$ otherwise.
    
    In both cases and for both limit equations i.e., $t\to\pm\infty$,
    $(0,0)$ is an equilibrium 
    having Morse index $m$. It follows easily that $L$ is surjective if
    we prove that $L'$ is surjective.
    
    For the sake of simplicity, we will henceforth assume that $m^-=m^+$.

    By Lemma \ref{le:131206-1558}, there are $t_1\leq t_2$
    and $R\in C^\infty(\left[t_1,t_2\right], \mathcal{L}(X^\alpha, X))$
    such that 
    \begin{equation}
      \label{eq:131206-1717}
      u_t + Au - B(t)u =
      \begin{cases}
        R(t)u &t\in\left[t_1,t_2\right]\\
        0 &\text{otherwise}
      \end{cases}
    \end{equation}
    does not have a non-trivial bounded solution. 
    The evolution operator $T(t,s)$
    defined by \eqref{eq:131206-1717} has an exponentially stable subspace of finite
    codimension for $t\geq s\geq t_2$, that is, $X = X_1\oplus X_2$
    with $\codim X_2 = m^+$ and for some $M,\delta>0$
    \begin{equation}
      \label{eq:140321-1620}
      \norm{T(t,t_2)x}_\alpha \leq Me^{-\delta(t-s)}\norm{x}_\alpha \text{ for }x\in X_2\text{ and }t\geq t_2.
    \end{equation}
    Suppose that $\tilde X := T(t_2, t_1)X^1 \subset X^1$, $\tilde X_2 := \tilde X\cap X_2$
    and $\tilde X = \tilde X_1 \oplus \tilde X_2$.

    \begin{sublemma}
      \label{le:140321-1534}
      For every $\eta\in \tilde X_1$, there is a $w\in \Y$ and
      a solution $v:\;\left[t_1, t_2\right]\to X^\alpha$ of
      \begin{equation*}
        v_t + Av = B(t)v + w
      \end{equation*}
      with $v(t_1) = 0$ and $v(t_2)=\eta$.
    \end{sublemma}

    \begin{proof}
      Let $u:\;\left[t_1,t_2\right]\to X^\alpha$ be a solution of $T(t,s)$
      with $u(t_2) = \eta\neq 0$. Note that, by standard regularity results, e.g. \cite[Lemma 4.a.6]{brunpol},
      one has $u\in C^{1,\delta}(\IR, X) \cap C^{0,\delta}(\left[t_1,t_2\right], X^1)$.

      Let $x:\;\IR\to \IR$ be $C^\infty$ with $x(t) = 0$ for $t\leq t_1$ and $x(t)=1$ for $t\geq t_2$.
      Setting $v(t):=u(t)\cdot x(t)$, one has
      \begin{align*}
        v_t(s) &= u_t(s) \cdot x(t) + u(t) \cdot x_t(s)\\
        &= (-A + B(s)) \underbrace{u(s) x(s)}_{=v(s)} + \underbrace{u(s) x_t(s)}_{=:w(s)}
      \end{align*}
      and
      \begin{equation*}
        \begin{array}{lcl}
          v(t_1) &=& x(t_1) \cdot\eta = 0\\
          v(t_2) &=& x(t_2)\cdot \eta = \eta
        \end{array}
      \end{equation*}
      as claimed.
    \end{proof}

    Let $\eta_1,\dots,\eta_n$ be a basis for $\tilde X_1$, and choose
    $w_1,\dots, w_n$ and $v_1,\dots, v_n$ according to Sublemma \ref{le:140321-1534}.
    It follows from Lemma \ref{le:131219-1505} that for
    every $h\in\Z$, there exists a unique mild solution $u_0\in C_B(\IR,X^\alpha)$ of
    \begin{equation}
      \label{eq:131206-1747}
      u_t + Au - B(t)u = R(t)u + h.
    \end{equation}

    Let $v_1:\;\left[t_1,\infty\right[\to X^\alpha$ denote the solution
    of
    \begin{equation*}
      v_t + Av - B(t)v = R(t)u_0\quad v(t_1) = 0,
    \end{equation*}
    and let $v_1(t_2) = \eta \oplus \eta' \in \tilde X_1\oplus \tilde X_2$.
    
    There is a $w_0\in\spn\{w_1,\dots,w_n\}$ such that the solution
    $v_2:\;\left[t_1,\infty\right[\to X^\alpha$ of
    \begin{equation*}
       v_t + Av - B(t)v = -w_0\quad v(t_1) = 0
    \end{equation*}
    satisfies $v_2(t_2) = \eta$.

    It follows that $v_0 := v_1 - v_2$ is a solution of
    \begin{equation*}
      v_t + Av - B(t)v = R(t)u_0 + w_0 \quad v(t_1) = 0
    \end{equation*}
    with $v_0(t_2)\in\tilde X_2\subset X_2$.

    Using \eqref{eq:140321-1620}, one concludes that $\sup_{t\in\IR}\norm{v_0(t)}_\alpha <\infty$.
    Furthermore, $u_0-v_0$ is a bounded mild solution of
    \begin{equation*}
      u_t + Au - B(t)u - w_0 = h,
    \end{equation*}
    so by \cite[Lemma 4.a.6]{brunpol}, one has $u_0-v_0\in\X$ and thus
    $L(u_0-v_0,w_0) = h$, which completes the proof of Theorem \ref{th:131206-1658}.
  \end{proof}

  \begin{lemma}
    \label{le:140327-1816}
    Suppose that $A$ is a sectorial operator having compact
    resolvent and $B$ satisfies (CH). Let the operator $L := L_B$
    be defined by 
    \begin{equation*}
      L_B u := u_t + Au - B(t)u
    \end{equation*}
    Then $\dim \Null(L_B) \leq m^-$.
  \end{lemma}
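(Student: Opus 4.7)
The plan is to exploit the exponential dichotomy of the limit equation at $-\infty$ granted by (CH) to embed $\Null(L_B)$ injectively into the $m^-$-dimensional unstable subspace at a fixed sufficiently negative time.

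First I would invoke the roughness of exponential dichotomies, namely \cite[Lemma 4.a.11]{brunpol} (already used in the proof of Theorem \ref{th:131206-1658}), to obtain a $t_1\in\IR$ and a projection family $(P(t))_{t\leq t_1}$ constituting an exponential dichotomy for the evolution operator $T(t,s)$ of $u_t + Au = B(t)u$ on $\left]-\infty, t_1\right]$, with $\dim\Ran(P(t)) = m^-$.

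Next, for any $u\in\Null(L_B)$ (so $u$ is a bounded mild solution of $u_t + Au = B(t)u$ on $\IR$), writing $u(t_1) = T(t_1,t)u(t)$ for $t\leq t_1$ and combining the commutation relation $T(t_1,t)(I-P(t)) = (I-P(t_1))T(t_1,t)$ with Definition \ref{df:140519-1839}(3) yields
\begin{equation*}
  \norm{(I-P(t_1))u(t_1)} \leq M e^{-\gamma(t_1-t)} \sup_{s\in\IR}\norm{u(s)}.
\end{equation*}
Letting $t\to-\infty$ forces $u(t_1)\in\Ran(P(t_1))$. The linear evaluation map $\Null(L_B)\to\Ran(P(t_1))$, $u\mapsto u(t_1)$, is injective: forward uniqueness for the linear parabolic equation recovers $u$ on $\left[t_1,\infty\right[$ from $u(t_1)$, while on $\left]-\infty,t_1\right]$ each $u(t)\in\Ran(P(t))$ is recovered by inverting the isomorphism $T(t_1,t)\colon \Ran(P(t))\to\Ran(P(t_1))$ from Definition \ref{df:140519-1839}(2). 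Hence $\dim\Null(L_B)\leq m^-$.

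The only step with any content is the first one, namely passing from the dichotomy of the autonomous limit equation $u_t + Au = B^- u$ to a dichotomy of the nonautonomous equation on a left half-line. This is a standard roughness assertion already encapsulated in \cite[Lemma 4.a.11]{brunpol}; the remainder of the argument is routine bookkeeping with the dichotomy estimates.
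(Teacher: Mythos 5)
Your argument is correct and is precisely the paper's proof spelled out in detail: the paper also deduces the bound as an immediate consequence of the exponential dichotomy on a left half-line $\left]-\infty,t_0\right]$ obtained from \cite[Lemma 4.a.11]{brunpol}, the point being that every bounded full solution must have $u(t)\in\Ran(P(t))$ there and is determined by $u(t_0)\in\Ran(P(t_0))$, a space of dimension $m^-$. No substantive difference.
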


  \begin{proof}
    This is an immediate consequence of the existence 
    of an exponentional dichotomy on an interval $\left]-\infty,t_0\right]$
    for small $t_0$, which follows from \cite[Lemma 4.a.11]{brunpol}.
  \end{proof}
    
\end{section}

\begin{section}{Adjoint equations}
  Throughout this section, suppose that $X$ is a reflexive Banach space,
  $A$ is a positive sectorial operator defined on $X^1\subset X$. 
  As usual, we write $\spr{x,x^*} := x^*(x)$. The
  adjoint operator $A^*$ with respect to this pairing
  is a positive sectorial operator on the
  dual space $X^*$ \cite[Theorem 1.10.6]{pazy}. Let $A^{*,\alpha}$
  denote the $\alpha$-th fractional power of the operator $A^*$
  and $X^{*,\alpha}$ the $\alpha$-th fractional power space
  defined by $A^{*,\alpha}$.

  For the rest of this section, fix some $\alpha\in\left[0,1\right[$,
  and suppose that (CH) holds. Recall that (CH) means in particular that
  $B(t)\to B^{\pm}$ as $t\to\pm\infty$. We also write $B(\pm\infty)$ to
  denote $B^\pm$.

  We will exploit the relationship between 
  \begin{equation}
    \label{eq:140106-1522}
    u_t + Au = B(t)u
  \end{equation}
  and its adjoint equation, where the adjoint
  is taken formally with respect to the pairing $(x,y) := \spr{x, A^{*,\alpha} y}$
  between $X$ and $X' := X^{*,\alpha}$.
  The adjoint equation for
  \eqref{eq:140106-1522} reads as follows.
  \begin{equation}
    \label{eq:140106-1522b}
    v_t + A^*v = (B(-t)A^{-\alpha})^* A^{*,\alpha} v =: B'(t)v
  \end{equation}

  \begin{lemma}
    \label{le:140407-1609}
    \begin{equation*}
       \spr{x,A^{*,\alpha}y} 
       = \spr{x,(A^*)^{\alpha}y}
       = \spr{x,(A^\alpha)^*y} \quad\forall (x,y)\in X^\alpha\times X^{*,\alpha}
    \end{equation*}
  \end{lemma}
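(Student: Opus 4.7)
The first equality is tautological, merely unfolding the notational convention $A^{*,\alpha} := (A^*)^\alpha$ set up at the start of the section. All content lies in the second equality, which asserts that $(A^*)^\alpha y = (A^\alpha)^* y$ in $X^*$ for every $y \in X^{*,\alpha}$, where $(A^\alpha)^*$ denotes the Banach-space adjoint of the unbounded operator $A^\alpha$. The plan is to deduce this from the corresponding identity for negative powers using the Balakrishnan integral representation.

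First, I would recall that for a positive sectorial operator $A$ and $\alpha\in\left]0,1\right[$, Balakrishnan's formula
\[
  A^{-\alpha} = \frac{\sin\pi\alpha}{\pi}\int_0^\infty s^{-\alpha}(sI+A)^{-1}\,\de s
\]
holds with the integral converging in operator norm. By \cite[Theorem 1.10.6]{pazy} cited in the paper, $A^*$ is also positive sectorial on the reflexive space $X^*$, so the analogous identity holds for $(A^*)^{-\alpha}$. Combining this with the elementary identity $\bigl((sI+A)^{-1}\bigr)^* = (sI+A^*)^{-1}$ and the fact that taking adjoints is an isometric (in particular continuous) operation $\LIN(X,X)\to\LIN(X^*,X^*)$, one may pull the adjoint inside the norm-convergent integral to obtain
\[
  (A^{-\alpha})^* = (A^*)^{-\alpha}.
\]

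Next, I would invert this identity. The operators $A^{-\alpha}:X\to X^\alpha$ and $(A^*)^{-\alpha}:X^*\to X^{*,\alpha}$ are by definition bijections; moreover, for a densely defined, injective, bounded operator $T$ with dense range, the general duality formula $(T^{-1})^* = (T^*)^{-1}$ applies, so inverting both sides above yields
\[
  (A^\alpha)^* = (A^*)^\alpha
\]
as unbounded operators on $X^*$, with common domain exactly $X^{*,\alpha} = \Ran(A^{*,-\alpha})$.

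Finally, for $x\in X^\alpha$ and $y\in X^{*,\alpha}$, applying the definition of the Banach-space adjoint gives $\spr{A^\alpha x,y} = \spr{x,(A^\alpha)^* y}$, and combining with the operator identity just established produces the chain of equalities in the statement. The only technical point, rather bookkeeping than obstacle, is justifying the interchange of adjoint and integral in Balakrishnan's formula; the absolute norm convergence of the integral for $\alpha\in\left]0,1\right[$ together with the continuity of $T\mapsto T^*$ makes this routine.
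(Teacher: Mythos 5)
Your proof is correct and follows essentially the same route as the paper: both reduce the claim to the operator identity $(A^{-\alpha})^* = (A^*)^{-\alpha}$ by dualizing a norm-convergent integral representation of $A^{-\alpha}$ and then inverting. The only cosmetic difference is that you use Balakrishnan's resolvent formula where the paper uses $A^{-\alpha}=\frac{1}{\Gamma(\alpha)}\int_0^\infty t^{\alpha-1}e^{-At}\,\de t$ together with $\spr{e^{-At}x,y}=\spr{x,e^{-A^*t}y}$; if anything, you are more explicit than the paper about the final inversion step $(T^{-1})^*=(T^*)^{-1}$ and the identification of the domain with $X^{*,\alpha}$.
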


  \begin{proof}
  	Recall that $A^{*,\alpha} = (A^*)^\alpha$ by definition.
    We have \cite[p. 70]{pazy} 
    \begin{equation*}
      A^{-\alpha} = \frac{1}{\Gamma(\alpha)} \int\limits^\infty_0 t^{\alpha-1} e^{-At}\,\de t,
    \end{equation*}
    where the integral is taken in $\mathcal{L}(X,X)$.

    Hence, for $x\in X$ and $y\in X^*$, one has
    \begin{align*}
      \spr{A^{-\alpha} x, y} 
      &= \frac{1}{\Gamma(\alpha)} \int\limits^\infty_0 t^{\alpha-1} \spr{e^{-At}x,y}\,\de t \\
      &= \frac{1}{\Gamma(\alpha)} \int\limits^\infty_0 t^{\alpha-1} \spr{x, e^{-A^*t}y}\,\de t\\
      &= \spr{x, (A^*)^{-\alpha}y}.
    \end{align*}
  \end{proof}

  \begin{lemma}
    \label{le:140108-1611}
    Let $B\in \mathcal{L}(X^\alpha,X)$. Then
    $B' := (BA^{-\alpha})^* A^{*,\alpha} \in \mathcal{L}(X^{*,\alpha},X^*)$
    with $\norm{B'} \leq \norm{B}$.
  \end{lemma}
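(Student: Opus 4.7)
The plan is to unwind the definition of the norms on the fractional power spaces and apply the elementary fact that taking the Banach adjoint preserves the operator norm. The two key ingredients are: (i) by the definition $\norm{x}_\alpha := \norm{A^\alpha x}_X$, the map $A^{-\alpha}\colon X\to X^\alpha$ is an isometric isomorphism; and (ii) analogously $A^{*,\alpha}\colon X^{*,\alpha}\to X^*$ is an isometric isomorphism by the corresponding definition on the dual side.

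First I would use (i) to observe that $BA^{-\alpha}\colon X\to X$ lies in $\mathcal{L}(X,X)$ with $\norm{BA^{-\alpha}}_{\mathcal{L}(X,X)} = \norm{B}_{\mathcal{L}(X^\alpha,X)}$, since $A^{-\alpha}$ is an isometry onto $X^\alpha$. Taking the Banach adjoint preserves operator norms, so $(BA^{-\alpha})^*\in\mathcal{L}(X^*,X^*)$ with $\norm{(BA^{-\alpha})^*}_{\mathcal{L}(X^*,X^*)} = \norm{B}_{\mathcal{L}(X^\alpha,X)}$.

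Finally, for $y\in X^{*,\alpha}$, point (ii) gives $\norm{A^{*,\alpha}y}_{X^*} = \norm{y}_{X^{*,\alpha}}$, so
\begin{equation*}
  \norm{B'y}_{X^*} = \norm{(BA^{-\alpha})^* A^{*,\alpha}y}_{X^*} \leq \norm{(BA^{-\alpha})^*}_{\mathcal{L}(X^*,X^*)}\,\norm{A^{*,\alpha}y}_{X^*} = \norm{B}\,\norm{y}_{X^{*,\alpha}},
\end{equation*}
which simultaneously shows that $B'\in\mathcal{L}(X^{*,\alpha},X^*)$ and the desired norm bound $\norm{B'}\leq\norm{B}$.

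There is no substantive obstacle here; the lemma is a bookkeeping exercise about the definition of the fractional power norms combined with the isometry of adjunction on Banach spaces. The only mild care required is to keep straight which fractional power (namely $A^{*,\alpha} = (A^*)^\alpha$, as reaffirmed in Lemma \ref{le:140407-1609}) is acting on which space, but once the two isometries in (i) and (ii) are identified, the estimate simply telescopes.
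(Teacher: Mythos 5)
Your proof is correct and uses essentially the same ingredients as the paper's: the paper verifies the bound pointwise through the duality pairing, computing $\abs{\spr{x,B'y}} = \abs{\spr{BA^{-\alpha}x,A^{*,\alpha}y}} \leq \norm{B}\,\norm{x}_X\,\norm{A^{*,\alpha}y}_{X^*}$, which is just the element-level form of your operator-level factorization into the two isometries and the norm-preservation of the adjoint.
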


  \begin{proof}
    Let $(x,y)\in X\times X^{*,\alpha}$. We have
    \begin{equation*}
      \begin{split}
        \abs{\spr{x,B'y}}
        &= \abs{\spr{BA^{-\alpha}x,A^{*,\alpha}y}} \\
        &\leq \norm{B}_{\mathcal{L}(X^\alpha,X)} \norm{x}_X \norm{A^{*,\alpha}y}_{X^*},
      \end{split}
    \end{equation*}
    which shows that $\norm{B'y}_{X^*} \leq \norm{B}_{\mathcal{L}(X^\alpha,X)} \norm{y}_{X^{*,\alpha}}$.
  \end{proof}

  \begin{lemma}
    \label{le:140108-1619}
    Let $J\subset\IR$ be an open interval, let $u:\;J\to X^\alpha$
    be a solution of \eqref{eq:140106-1522} and $v:\;-J\to X^{*,\alpha}$ be
    a solution of \eqref{eq:140106-1522b}. Then 
    \begin{equation*}
      (u(t),v(-t)) \equiv C\quad\text{for all } t\in J.
    \end{equation*}
  \end{lemma}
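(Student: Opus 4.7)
My plan is to introduce $\phi(t) := \spr{u(t), A^{*,\alpha}v(-t)}$ and show $\phi'(t) \equiv 0$, splitting the four terms that arise after differentiation into a ``sectorial'' pair (involving $A$ and $A^*$) and a ``coupling'' pair (involving $B$ and $B'$). First I would differentiate, using the product rule and the chain rule for the $-t$ argument:
\begin{equation*}
  \phi'(t) = \spr{u_t(t), A^{*,\alpha}v(-t)} - \spr{u(t), A^{*,\alpha}v_t(-t)},
\end{equation*}
and then substitute $u_t(t) = -Au(t) + B(t)u(t)$ from \eqref{eq:140106-1522} and $v_t(-t) = -A^*v(-t) + B'(-t)v(-t)$ from \eqref{eq:140106-1522b}.

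For the sectorial pair $-\spr{Au(t), A^{*,\alpha}v(-t)} + \spr{u(t), A^{*,\alpha}A^*v(-t)}$, I would use that $A^*$ and $A^{*,\alpha}$ are commuting fractional powers of the same operator to rewrite $A^{*,\alpha}A^*v(-t) = A^*A^{*,\alpha}v(-t)$, and then the standard adjoint identity $\spr{Ax, y} = \spr{x, A^*y}$ with $y := A^{*,\alpha}v(-t)$ collapses this pair to zero. For the coupling pair $\spr{B(t)u(t), A^{*,\alpha}v(-t)} - \spr{u(t), A^{*,\alpha}B'(-t)v(-t)}$, I would first invoke Lemma \ref{le:140407-1609} to rewrite the second term as $\spr{A^\alpha u(t), B'(-t)v(-t)}$, and then unfold the definition $B'(-t) = (B(t)A^{-\alpha})^* A^{*,\alpha}$:
\begin{equation*}
  \spr{A^\alpha u(t), (B(t)A^{-\alpha})^* A^{*,\alpha}v(-t)} = \spr{B(t)A^{-\alpha}A^\alpha u(t), A^{*,\alpha}v(-t)} = \spr{B(t)u(t), A^{*,\alpha}v(-t)},
\end{equation*}
so this pair also cancels. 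Hence $\phi' \equiv 0$, and $\phi$ is constant on $J$.

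The main technical obstacle I expect is justifying these manipulations at the regularity actually available: applying $A$ to $u(t)$ requires $u(t) \in X^1$, applying $A^*$ to $v(-t)$ requires $v(-t) \in X^{*,1}$, and commuting $A^{*,\alpha}$ with $d/dt$ requires $v(-t)$ to be differentiable in $X^{*,\alpha}$. For the parabolic problems \eqref{eq:140106-1522} and \eqref{eq:140106-1522b}, the standard parabolic smoothing supplies this regularity at every interior point of $J$, so $\phi'$ vanishes there; combined with the continuity of $\phi$ on $J$ (from $u \in C(J, X^\alpha)$, $v \in C(-J, X^{*,\alpha})$, and continuity of the pairing), this forces $\phi$ to be constant. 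If one worries about mild solutions that are not classical, the identity can be established first for classical solutions arising from smooth data and then extended by density, since both sides depend continuously on the solutions in the relevant topologies.
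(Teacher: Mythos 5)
Your proposal is correct and follows essentially the same route as the paper: both define the pairing $h(t)=\spr{u(t),A^{*,\alpha}v(-t)}$, differentiate, substitute the two equations, and cancel the sectorial and coupling pairs using Lemma \ref{le:140407-1609} and the definition of $B'$. Your explicit treatment of the regularity needed to justify the differentiation matches the paper's appeal to the H\"older continuity of $B$ and $B'$ and standard parabolic smoothing.
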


  \begin{proof}
    We consider the function $h(t) := (u(t),v(-t))$, which is
    defined for all $t\in J$. Note that $B$ is Hölder-continuous by (CH).
    Lemma \ref{le:140108-1611} implies that $B'$ is also Hölder-continuous.
    Therefore, $u$ and $v$ are continuously differentiable in $X$ respectively $X^*$.
    One has
    \begin{equation*}
      \begin{split}
        h_t(s) &= \lim_{h\to 0} \frac{1}{h}\left( \spr{u(s+h) - u(s),A^{*,\alpha}v(-s-h)} + \spr{A^\alpha u(s), v(-s-h) - v(-s)}\right)\\
        &= \spr{u_t(s), A^{*,\alpha}v(-s)} + \spr{A^\alpha u(s), -v_t(-s)} \\
        &= (-Au(s) + B(t)u(s), v(-s)) + \spr{A^\alpha u(s), A^*v(-s) - B'(-t)v(-s)} = 0\\
      \end{split}
    \end{equation*}
  \end{proof}

  \begin{lemma}
    \label{le:140409-1425}
    Let $J\subset \IR$ be an interval and $P:\; J\to \mathcal{L}(X^\alpha, X^\alpha)$ 
    an exponential dichotomy for the evolution operator $T(t,s)$ on $X^\alpha$ defined
    by \eqref{eq:140106-1522}. 

    Then $P':\; -J\to \mathcal{L}(X^{*,\alpha}, X^{*,\alpha})$, $P'(t) := A^{*,-\alpha}P(-t)^*A^{*,\alpha}$,
    is an exponential dichotomy for the evolution operator $T'(t,s)$
    defined by \eqref{eq:140106-1522b}.
  \end{lemma}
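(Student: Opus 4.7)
The plan is to reduce the lemma to an explicit duality formula for the two evolution operators, then verify the four dichotomy conditions by algebraic manipulation.

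The first and main step is to establish
\begin{equation*}
T'(\sigma,\tau) \;=\; A^{*,-\alpha}\, T(-\tau,-\sigma)^{*}\, A^{*,\alpha} \qquad (\sigma\geq\tau\text{ in }-J),
\end{equation*}
via Lemma \ref{le:140108-1619}. Given $x\in X^\alpha$ and $y\in X^{*,\alpha}$, set $u(r) := T(r,-\sigma)x$ on $[-\sigma,-\tau]$ and $v(r) := T'(r,\tau)y$ on $[\tau,\sigma]$. The constancy of $r\mapsto (u(r), v(-r))$, evaluated at the two endpoints $r=-\sigma$ and $r=-\tau$, gives
\begin{equation*}
\spr{T(-\tau,-\sigma)x,\, A^{*,\alpha}y} \;=\; \spr{x,\, A^{*,\alpha}T'(\sigma,\tau)y},
\end{equation*}
and the claimed formula follows by the definition of the standard $X,X^*$ adjoint.

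Once the formula is available, conditions (1) and (2) of Definition \ref{df:140519-1839} for $(T',P')$ reduce to algebra. $P'(t)$ is idempotent because $(P(-t)^{*})^{2}=(P(-t)^{2})^{*}=P(-t)^{*}$ and the adjacent factors $A^{*,\alpha}A^{*,-\alpha}$ cancel. The intertwining $T'(\sigma,\tau)P'(\tau)=P'(\sigma)T'(\sigma,\tau)$ is obtained by dualising the intertwining $T(-\tau,-\sigma)P(-\sigma)=P(-\tau)T(-\tau,-\sigma)$ (valid since $-\tau\geq -\sigma$) and conjugating by $A^{*,\pm\alpha}$. Since the ranges $\Ran P(-t)$ are finite-dimensional (of dimension $m^{\pm}$), dualising the isomorphism $T(-\tau,-\sigma)\colon \Ran P(-\sigma)\to \Ran P(-\tau)$ yields an isomorphism between $\Ran P(-\tau)^{*}$ and $\Ran P(-\sigma)^{*}$; after conjugation by $A^{*,-\alpha}$ and $A^{*,\alpha}$, this becomes the required isomorphism of $\Ran P'(\tau)$ onto $\Ran P'(\sigma)$.

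For the exponential bounds (3) and (4), using $\norm{z}_{X^{*,\alpha}}=\norm{A^{*,\alpha}z}_{X^*}$ and the commutation $T'(\sigma,\tau)(I-P'(\tau))=(I-P'(\sigma))T'(\sigma,\tau)$, one rewrites
\begin{equation*}
\norm{T'(\sigma,\tau)(I-P'(\tau))y}_{X^{*,\alpha}} \;=\; \norm{\left[T(-\tau,-\sigma)(I-P(-\sigma))\right]^{*}A^{*,\alpha}y}_{X^*},
\end{equation*}
and analogously for the stable half with $\sigma<\tau$. The main obstacle I anticipate is a norm-transfer issue: the dichotomy estimates for $T$ live in $\mathcal{L}(X^\alpha,X^\alpha)$, while the adjoints on the right-hand side are most naturally estimated in $\mathcal{L}(X^{*},X^{*})$. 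This is reconciled using that $T(t,s)$ and $P(t)$ extend to bounded operators on $X$ itself (a standard regularity property in our parabolic, compact-resolvent setting) together with the elementary identity $\norm{B^{*}}_{\mathcal{L}(X^{*},X^{*})}=\norm{B}_{\mathcal{L}(X,X)}$. Once both sides are recast in $\mathcal{L}(X,X)$, the exponential rate $\gamma$ carries over unchanged and only the constant $M$ may need adjustment.
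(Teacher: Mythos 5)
Your proposal is correct and follows essentially the same route as the paper: the key step in both is Lemma \ref{le:140108-1619} (constancy of the pairing $(u(t),v(-t))$), which yields the duality relation between $T$ and $T'$, after which the four dichotomy axioms are checked by adjoint algebra and transferring the exponential estimates through the pairing. Your explicit formula $T'(\sigma,\tau)=A^{*,-\alpha}T(-\tau,-\sigma)^*A^{*,\alpha}$ and your remark on the norm-transfer between $\mathcal{L}(X^\alpha,X^\alpha)$ and $\mathcal{L}(X^*,X^*)$ merely make explicit what the paper handles implicitly (its constant ``$C$ determined by the family $P(.)$'').
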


  \begin{proof}
    It is easy to see that $P'$ is well-defined and continuous (Lemma \ref{le:140108-1611}).
    We need to check the assumptions of an exponential dichotomy (Definition \ref{df:140519-1839}).

    Suppose that
    $(x,y)\in X^\alpha\times X^{*,\alpha}$
    and $\left[s,t\right]\subset J$.
    \begin{enumerate}
      \item From Lemma \ref{le:140108-1619}, we obtain
        \begin{equation*}
          \begin{split}
            (x,P'(-s)T'(-s,-t)y) &= (T(t,s)P(s)x, y)\\
            &= (P(t)T(t,s)x, y) \\
            &= (x, T'(-s,-t)P'(-t)y).
          \end{split}
        \end{equation*}
        Since $A^\alpha:\; X^\alpha\to X$ is an isomorphism, it follows
        that $P'(-s)T'(-s,-t) = T'(-s,-t)P'(-t)$.
      \item To show that $T'(-s,-t):\;\Ran(P'(-t))\to \Ran(P'(-s))$
        is an isomorphism, it is sufficient to show that it is injective.
        Suppose that $T'(-s,-t)y = 0$ for some $y\in\Ran(P'(-t))$. For $x\in X^\alpha$,
        we have
        \begin{equation*}
          0 = (x,T'(-s,-t)y) = (T(t,s)x,P'(-t)y) = (T(t,s)P(s)x,y),
        \end{equation*}
        so $(x,y) = 0$ for all $x\in \Ran(P(t))$. This in turn implies
        $(x,y) = (x,P'(-t)y) = (P(t)x,y) = 0$ for all $x\in X^\alpha$, that is,
        $y = 0$.
      \item
        The estimates for $y\in \Ran(P'(-t))$ and $y\in \Ran(I-P'(-t))$ can be deduced
        using roughly the same arguments. Hence, we will treat only the case
        $y\in \Ran(P'(-t))$.

        Suppose that 
        \begin{equation*}
          \norm{T(t,s)x}_\alpha \leq M e^{-\gamma(s-t)} \norm{x}_\alpha\quad s>t\quad x\in\Ran(P(s)).
        \end{equation*}
        We have
        \begin{equation*}
          \begin{split}
            \spr{x, A^{*,\alpha}T'(-s,-t)y}
            &= (x,T'(-s,-t)P'(-t)y) \\
            &= (P(s)x, T'(-s,-t)P'(-t)y) \\
            &= (T(t,s)P(s)x, y) \\
            & \leq CM e^{-\gamma(s-t)} \norm{x}_X \norm{A^{*,\alpha}y}_{X^*}.
          \end{split}
        \end{equation*}
        Thus, $\norm{A^{*,\alpha}T'(-s,-t)y}_{X^*} \leq CM e^{-\gamma(s-t)} \norm{A^{*,\alpha}y}_{X^*}$,
        where the constant $C$ is determined by the family $P(.)$ of projections.
    \end{enumerate}
  \end{proof}

  To sum it up, we have proved that \eqref{eq:140106-1522b} satisfies
  (CH). In comparison to \eqref{eq:140106-1522}, 
  the Morse indices $m^-$ and $m^+$ are obviously swapped. This is caused by
  the reversal of the time variable.

  Let the spaces $\X$, $\Y$, $\Z$ be defined as in the previous section,
  and let $\X'$, $\Y'$, $\Z'$ denote their dual counterparts, that is,
  \begin{align*}
    \X' &:= C^{1,\delta}_B(\IR,X^*) \cap C^{0,\delta}_B(\IR,X^{*,1}) \\
    \Z' &:= C^{0,\delta}_B(\IR,X^*).
  \end{align*}
  We consider the operators $L\in \mathcal{L}(\X,\Z)$
  (resp. $L'\in\mathcal{L}(\X',\Z')$) defined by
  \begin{equation*}
    Lu := u_t + Au - B(t)u
  \end{equation*}
  and
  \begin{equation*}
    L'v := v_t + A^*v - B'(t)v.
  \end{equation*}

  \begin{lemma}
    \label{le:140107-1702}
    If $\Ran(L)\supset \X$, then $\Null(L') = \{0\}$.
    Analogously, if $\Ran(L')\supset \X'$, then $\Null(L) = \{0\}$.
  \end{lemma}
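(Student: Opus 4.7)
The plan is to argue by duality. Suppose $v\in\Null(L')$; the goal is to show $v\equiv 0$ by testing against inhomogeneities $h\in\X$, which by the hypothesis $\Ran(L)\supset\X$ are all of the form $h=Lu$ for some $u\in\X$.

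Fix a compactly supported $h\in\X$ and choose $u\in\X$ with $Lu=h$. The core computation is
\begin{equation*}
\frac{\de}{\de t}(u(t),v(-t)) \;=\; (h(t),v(-t)),
\end{equation*}
which is obtained exactly as in the proof of Lemma~\ref{le:140108-1619}: the contributions of $-Au+B(t)u$ coming from $u_t$ and of $-A^*v(-t)+B'(-t)v(-t)$ coming from $v_t(-t)$ cancel by Lemma~\ref{le:140407-1609} and the definition of $B'$, leaving only the $h$-term. Integrating from $-T$ to $T$ yields
\begin{equation*}
(u(T),v(-T)) - (u(-T),v(T)) \;=\; \int_{-T}^{T}(h(t),v(-t))\,\de t.
\end{equation*}

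The central step is to let $T\to\infty$ and show that the boundary pairings vanish. Outside $\mathrm{supp}(h)$, $u$ is a bounded mild solution of the homogeneous equation $u_t+Au=B(t)u$ on a half-line, so (CH) together with \cite[Lemma~4.a.11]{brunpol} provides an exponential dichotomy on that half-line, whose stable projection forces $u(t)\to 0$ exponentially as $|t|\to\infty$. The same reasoning applied to $v$ uses Lemma~\ref{le:140409-1425}, which shows that the adjoint equation also satisfies (CH), so $v(t)\to 0$ exponentially as $|t|\to\infty$. Since $A^{*,\alpha}v(\mp T)$ remains bounded in $X^*$ while $u(\pm T)\to 0$ in $X$, the boundary contributions vanish and we conclude
\begin{equation*}
\int_{-\infty}^{\infty}\langle h(t),A^{*,\alpha}v(-t)\rangle\,\de t \;=\; 0
\end{equation*}
for every compactly supported $h\in\X$.

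Finally, testing with $h(t):=\phi(t)\,y$ for arbitrary $\phi\in C^\infty_c(\IR)$ and $y\in X^1$ gives $\langle y,A^{*,\alpha}v(-t)\rangle=0$ for every $t\in\IR$; since $X^1$ is dense in $X$ and $A^{*,\alpha}\colon X^{*,\alpha}\to X^*$ is injective, this forces $v\equiv 0$. The second statement follows by the same argument with the roles of $L$ and $L'$ reversed, which is legitimate because Lemma~\ref{le:140409-1425} shows the adjoint equation satisfies (CH) (with the Morse indices $m^\pm$ swapped). The main technical delicacy is the exponential decay of $u$ and $v$ at $\pm\infty$: without it, the boundary contributions in the integration-by-parts identity need not vanish and the whole duality argument collapses.
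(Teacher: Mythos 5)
Your proof is correct and follows essentially the same route as the paper's: pair $v\in\Null(L')$ against $Lu=h$ for compactly supported $h$, integrate by parts, kill the boundary terms using the decay of $u$ at $\pm\infty$ (forced by the half-line dichotomies from (CH) once $h$ vanishes there) together with boundedness of $v$, and conclude by density of $X^1$ in $X$. The only cosmetic differences are that you argue directly rather than by contradiction, you claim exponential decay of $u$ and $v$ where mere boundedness of $v$ and convergence $u(t)\to 0$ suffice, and for the second claim the paper makes explicit the Hahn--Banach/separation step that you fold into ``the same argument''.
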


  \begin{proof}
    Assume that $L'v = 0$ for some $v\in \X'$ and let
    $u\in\X$ satisfy $u(t)\to 0$ as $\abs{t}\to\infty$. Integration by parts
    shows that 
    \begin{equation*}
      \begin{split}
        &\int\limits^a_{-a} \spr{Lu(s), A^{*,\alpha}v(-s)}\,\de s \\
        &\quad= \int\limits^a_{-a} \spr{u_t(s), A^{*,\alpha}v(-s)} + \spr{A^\alpha u(s), A^*v(-s) - B'(-s)v(-s)}\,\de s\\
        &\quad= (u(a),v(-a)) - (u(-a),v(-a)) + \int\limits^a_{-a} \spr{A^\alpha u(s), \underbrace{v_t(-s) + (A^* - B'(-s))v(-s)}_{=(L'v)(-s) \equiv 0}} \,\de s.\\
      \end{split}
    \end{equation*}
    Consequently for all $u\in\X$ with $u(t)\to 0$ as $\abs{t}\to\infty$, one has
    \begin{equation}
      \label{eq:140106-1748}
      \int\limits^a_{-a} \spr{(Lu)(s), A^{*,\alpha}v(-s)}\,\de s \to 0\text{ as }a\to\infty.
    \end{equation}

    Arguing by contradiction, suppose that $v(t_0)\neq 0$ for some
    $t_0\in\IR$. Since \cite[Theorem 2.6.8]{pazy} $X^1$ is dense in $X$,
    there is an $x_0\in X^1$ such that $\spr{x_0,A^{*,\alpha}v(t_0)}\neq 0$.
    Choose $w\in C^{1,\delta}_{B}(\IR, X^1)$ such that $w(t_0) = x_0$ and $w(t) = 0$ for all
    $t\in\IR$ with $\abs{t-t_0}\geq \eps$. For small $\eps>0$, we have
    \begin{equation*}
      C := \int\limits^\infty_{-\infty} \spr{\underbrace{w(s)}_{\in \Ran(L)}, A^{*,\alpha}v(-s)} \,\de s \neq 0.
    \end{equation*}
    We further have $w\in \X \subset \Ran(L)$, that is, $w = Lu$ for some
    $u\in\X$. Since $w(t) = 0$ for $\abs{t}$ sufficiently large, it follows from (CH) respectively
    from the existence of exponential dichotomies at $\infty$ and $-\infty$ that
    $u(t)\to 0$ as $\abs{t}\to 0$. Hence, one has $C=0$ by \eqref{eq:140106-1748}, which is a contradiction.

    Using the Hahn-Banach theorem, the second claim can be treated similarly.
  \end{proof}
  
  \begin{lemma}
    \label{le:140107-1701}
    Suppose that $L$ is surjective. Then:
    \begin{enumerate}
      \item $m^-\geq m^+$;
      \item if $m^-=m^+$, then $L$ is also injective.
    \end{enumerate}
  \end{lemma}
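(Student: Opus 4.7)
The plan is to deduce both claims from the Fredholm index formula $\mathrm{ind}(L) = m^- - m^+$, using the duality between $L$ and $L'$ developed in this section.

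I would first record that $L: \X \to \Z$ is Fredholm, as already used in the proof of Theorem~\ref{th:131127-1503} via Theorem~\ref{th:131206-1658} and Lemma~\ref{le:140327-1816}. The two kernel bounds $\dim \Null(L) \leq m^-$ and $\dim \Null(L') \leq m^+$ follow from Lemma~\ref{le:140327-1816}, applied to $L$ and to $L'$ (the Morse indices for $L'$ being swapped by Lemma~\ref{le:140409-1425}). Next, Lemma~\ref{le:140107-1702} gives $\Null(L') = \{0\}$ from the surjectivity of $L$; more generally, the integration-by-parts computation in that proof, combined with Hahn--Banach and the closedness of $\Ran(L)$, upgrades to the duality $\codim \Ran(L) = \dim \Null(L')$.

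The main step is the index formula $\mathrm{ind}(L) = m^- - m^+$. I would realize $\Null(L)$ as the intersection $U_-(t_0) \cap S_+(t_0)$, where $U_-(t_0) \subset X^\alpha$ is the $m^-$-dimensional space of values at a fixed time $t_0$ of solutions bounded on $(-\infty,t_0]$, and $S_+(t_0)$ is the codimension-$m^+$ space of values at $t_0$ of solutions bounded on $[t_0,\infty)$. Analogously, $\Ran(L)$ is the preimage, under the map $h \mapsto w_h - v_h$ that compares particular solutions built by variation of constants from each infinity, of $U_-(t_0) + S_+(t_0)$, giving $\codim \Ran(L) = \codim(U_-(t_0) + S_+(t_0))$. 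The elementary identity
\[
  \dim(U_- \cap S_+) - \codim(U_- + S_+) = \dim U_- - \codim S_+ = m^- - m^+
\]
then yields the formula.

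With these ingredients both claims follow at once: surjectivity of $L$ gives $\codim \Ran(L) = 0$, so $\mathrm{ind}(L) = \dim \Null(L) \geq 0$, and the index formula forces $m^- \geq m^+$, proving~(1); when $m^- = m^+$, the index vanishes, so $\Null(L) = \{0\}$ and $L$ is injective, proving~(2). The main obstacle is the dichotomy-based description of $\Ran(L)$ in the infinite-dimensional parabolic setting: one must parametrize bounded half-line solutions by the associated dichotomy subspace and identify the obstruction to gluing with $X^\alpha / (U_- + S_+)$, which rests on the standard but nontrivial half-line regularity results imported in this paper via citations to \cite{brunpol}.
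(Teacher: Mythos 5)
Your strategy --- deducing both claims from the Fredholm index formula $\mathrm{ind}(L)=m^--m^+$ --- is a genuinely different route from the paper's, and it is precisely the route the paper is organized to avoid. Note first that the machinery actually established in the paper never yields the index \emph{formula}: Lemma \ref{le:140327-1816} gives only the upper bound $\dim\Null(L)\le m^-$, and Theorem \ref{th:131206-1658} gives only that $L$ becomes surjective after adjoining an $m$-dimensional space of compactly supported perturbations, i.e.\ $\codim\Ran(L)\le m$; accordingly the proof of Theorem \ref{th:131127-1503} claims only Fredholmness with an index \emph{bound}. Your sketch therefore has to supply the formula itself, and the two steps you defer are exactly the hard ones. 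First, the identity $\codim\Ran(L)=\codim\bigl(U_-(t_0)+S_+(t_0)\bigr)$ requires showing that the obstruction map $h\mapsto w_h(t_0)-v_h(t_0)$ is surjective onto $X^\alpha/\bigl(U_-(t_0)+S_+(t_0)\bigr)$; the trivial direction of the gluing argument gives only $\codim\Ran(L)\le\codim(U_-+S_+)$, which is useless here since you need a \emph{lower} bound on the cokernel to contradict surjectivity. Second, $\dim U_-(t)=m^-$ is known only for $t$ near $-\infty$ and $\codim S_+(t)=m^+$ only for $t$ near $+\infty$ (this is all that \cite[Lemma 4.a.11]{brunpol} is invoked for in the proof of Lemma \ref{le:131206-1558}); to evaluate both at a common $t_0$ you must transport one of them across a middle interval by the compact, generally non-injective evolution operator --- the injectivity hypothesis of \cite[Lemma 4.a.12]{brunpol} is explicitly renounced in the introduction --- so $T(t_2,t_1)$ may collapse part of $U_-(t_1)$ and $T(t_2,t_1)^{-1}S_+(t_2)$ may have codimension strictly less than $m^+$. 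A careful bookkeeping (working with $U_-(t_1)\cap T(t_2,t_1)^{-1}S_+(t_2)$ and $T(t_2,t_1)U_-(t_1)+S_+(t_2)$) shows these losses cancel, but neither this bookkeeping nor the surjectivity of the obstruction map appears in your proposal, and neither is discharged by the regularity results you cite.

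The paper obtains the cokernel information on the dual side instead: Lemma \ref{le:140107-1702} (an integration-by-parts duality) shows that surjectivity of $L$ forces $\Null(L')=\{0\}$ for the formal adjoint equation. For (1), the adjoint dichotomies of Lemma \ref{le:140409-1425} have swapped Morse indices, so if $m^-<m^+$ a pure dimension count produces a non-trivial bounded adjoint solution, a contradiction. For (2), Lemma \ref{le:131219-1505} applied to the injective $L'$ (its two indices are equal) yields surjectivity of $L'$, and a second application of Lemma \ref{le:140107-1702} gives injectivity of $L$. If you wish to keep the index-theoretic route you must prove the exact cokernel count under the paper's hypotheses; the moment you extract it by duality you will have reproduced the paper's argument.
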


  \begin{proof}
    \begin{enumerate}
      \item 
        Assume to the contrary that $m^-<m^+$.
        Let $P^-$ (resp. $P^+$) denote the projections
        associated with the exponential dichotomy at $-\infty$ (resp. $+\infty$),
        which are given by (CH). Let $(P^-)'$ and $(P^+)'$ defined
        by Lemma \ref{le:140409-1425}. Note that $\dim\Ran(P^-)' = m^-$
        and $\dim\Ran(P^+)' = m^+$.

        By $T'(t,s)$, we mean the evolution operator on $X^{\alpha,*}$ 
        defined by \eqref{eq:140106-1522b}. Let $t_1<0<t_2$ so that $(P^+)'(t_1)$ and
         $(P^-)'(t_2)$ are defined. Since $m^-<m^+$, the operator 
         $(P^-(t_2))'T'(t_2,t_1):\; \Ran(P^+)'(t_1)\to \Ran(P^-)'(t_2)$
		is not injective. Therefore, there exists a non-trivial bounded solution
        of \eqref{eq:140106-1522b}, in contradiction to Lemma \ref{le:140107-1702}.
      \item
        $L'$ is injective by Lemma \ref{le:140107-1702}.
        We can now apply Lemma \ref{le:131219-1505} to $L'$, 
        showing that $L'$ is also surjective. 
        Finally, Lemma \ref{le:140107-1702} implies that
        $L$ is injective as claimed.
    \end{enumerate}
  \end{proof}
\end{section}

\providecommand{\bysame}{\leavevmode\hbox to3em{\hrulefill}\thinspace}
\providecommand{\MR}{\relax\ifhmode\unskip\space\fi MR }
\providecommand{\MRhref}[2]{%
	\href{http://www.ams.org/mathscinet-getitem?mr=#1}{#2}
}
\providecommand{\href}[2]{#2}

\end{document}